\newtheorem{theorem}{Theorem}[section]
\newtheorem{lemma}[theorem]{Lemma}
\newtheorem{proposition}[theorem]{Proposition}
\newtheorem{mtheorem}{Theorem}
\newtheorem{mcorollary}[mtheorem]{Corollary}
\theoremstyle{definition}
\newtheorem{definition}[theorem]{Definition}
\newtheorem{remark}[theorem]{Remark}
\newtheorem{convent}[theorem]{Convention}
\numberwithin{equation}{section}
\numberwithin{figure}{section}
\begin{document}

%%%%%%%%%%%%%
\title[Invariant sets with and without SRB measures]{Coexistence of invariant sets with and without SRB measures in H\'enon family}

\author{Shin Kiriki} 
\address{Department of Mathematics, Kyoto University of Education, 
Fukakusa-Fujinomori 1, Fushimi, Kyoto, 612-8522, JAPAN}
\email{skiriki@kyokyo-u.ac.jp}

\author{Ming-Chia Li}
\address{Department of Applied Mathematics, 
National Chiao Tung University, Hsinchu 300, TAIWAN}
\email{mcli@math.nctu.edu.tw}

\author{Teruhiko Soma}
\address{Department of Mathematics and Information Sciences,
Tokyo Metropolitan University,
Minami-Osawa 1-1, Hachioji, Tokyo 192-0397, JAPAN}
\email{tsoma@tmu.ac.jp}

\dedicatory{
Dedicated to the memory of Floris Takens (Nov.\ 12, 1940 -- Jun.\ 20, 2010).
}

\subjclass[2000]{Primary: 37C29; Secondary: 37G25, 37G30, 37G40}
\keywords{H\'enon maps, SRB measures, Newhouse phenomenon, persistent tangencies}

\date{\today}
%\date{First version: April 20, 2006}

\begin{abstract}
Let $\{f_{a,b}\}$ be the (original) H\'enon family.
In this paper, we show that, for any $b$ near $0$, there exists a closed interval $J_b$ which contains 
a dense subset $J'$ such that, for any $a\in J'$, $f_{a,b}$ has a quadratic homoclinic tangency 
associated with a saddle fixed point of $f_{a,b}$ which unfolds generically with respect to the one-parameter family $\{f_{a,b}\}_{a\in J_b}$.
By applying this result, we prove that $J_b$ contains a residual subset $A_b^{(2)}$ 
such that, for any $a\in A_b^{(2)}$, $f_{a,b}$ admits the Newhouse phenomenon.
Moreover, the interval $J_b$ contains a dense subset $\tilde A_b$ such that, for any $a\in \tilde A_b$, $f_{a,b}$ 
has a large homoclinic set without SRB measure and a small strange attractor with SRB measure simultaneously.
\end{abstract}

\maketitle

\section{Introduction}\label{Introduction}

In \cite{He} H\'{e}non studied numerically the dynamics of the diffeomorphisms $f_{a,b}:\mathbb{R}^2\to 
\mathbb{R}^2$ defined  as 
\begin{equation}\label{original}
f_{a,b}(x,y)=(1-ax^2+y, bx)
\end{equation}
for $b\neq 0$ and presented a supporting evidence for the existence of a strange attractor of $f_{a,b}$ when $a=1.4$ and $b=1.3$.
We call these maps \textit{H\'enon maps} or more strictly \textit{original} H\'enon maps
consciously to distinguish them from H\'enon-like maps.
Subsequently, Forn{\ae}ss and Gavosto \cite{FG} gave a computer assisted proof that, for some  $a_0=1.3924198\cdots$ and 
any fixed $b_0$ near $0.3$, $f_{a_0,b_0}$ has a generic unfolding homoclinic tangency.
Furthermore, applying results of Mora and Viana \cite{MV} to the H\'{e}non maps, one has that 
$f_{a,b}$ exhibits a small strange attractor for parameters $(a,b)$ arbitrarily close to $(a_0,b_0)$.

Let $M$ be a surface and 
$\mathrm{Diff}^r(M)$ the space of $C^r$ diffeomorphisms with $C^r$ topology for an integer $r\geq 2$. 
We say that a connected subset $A$ of $\mathrm{Diff}^r(M)$ has \textit{persistent homoclinic tangencies} if a continuation of basic sets $\Lambda(f)$ of $f\in A$ is well defined and each $\Lambda(f)$ has a {\it homoclinic tangency}.
That is, there exist points $x, y\in \Lambda(f)$, possibly $x=y$, such that $W^u(x)$ and  $W^s(y)$ have a tangency.
Newhouse \cite{N0,N1} (see also  Palis-Takens \cite{PT}) showed that, 
if $f\in \mathrm{Diff}^r(M)$ has a homoclinic tangency associated with a dissipative saddle  point, 
then for any neighborhood $U(f)$ of $f$ in $\mathrm{Diff}^r(M)$, there is an element $g\in U(f)$ some connected neighborhood $N$ of which has persistent homoclinic tangencies.
Moreover, there exists a residual subset of $N$ each element of which is a diffeomorphism admitting infinitely  many sinks.
The condition with infinitely many sinks is called the \textit{Newhouse phenomenon}.
Robinson \cite{R} detected the phenomenon in the context of certain one-parameter families in $\mathrm{Diff}^r(M)$.
One of our aims in this paper is to show that  the original H\'enon family admits the Newhouse phenomenon.

Let $\Lambda_1$ and $\Lambda_2$ be basic sets  for $f\in \mathrm{Diff}^r(M)$.
Then, we say that $\Lambda_1$ is \textit{homoclinically related} to  $\Lambda_2$ if either  $\Lambda_1=\Lambda_2$
or there are points $x_1, y_1 \in \Lambda_1$ and $x_2, y_2 \in \Lambda_2$ such that $W^u(x_1) \setminus \Lambda_1$ has a non-empty
transverse intersection with $W^s (x_2)  \setminus \Lambda_2$ and $W^s (y_1)  \setminus \Lambda_1$ has a non-empty transverse
intersection with $W^u(y_2)   \setminus \Lambda_2$. 
The closure $\Lambda$ of the union of basic sets homoclinically related to each other is called a \textit{homoclinic set} if $\Lambda$ contains more than a single periodic orbit.
When one of these basic sets contains a periodic point $p$ of $f$, $\Lambda$ is called the homoclinic set of $p$.
We say that a homoclinic set $\Lambda$ has a \textit{homoclinic tangency} if $\Lambda$ contains a basic set with a homoclinic tangency.
Thirty years after Newhouse's original works, new results were presented by himself.
In fact, he showed in \cite[Theorem 1.4]{N2} that there is a residual subset $R$ of $\mathrm{Diff}^r(M)$  such that, if
$f\in R$ and  $\Lambda(f)$ is a homoclinic set for $f$ which contains a homoclinic tangency and has an associated dissipative saddle point, then $\Lambda(f)$  does not carry an SRB measure.
Here, \textit{SRB measure} means  an 
$f$-invariant Borel probability measure which is ergodic, has  a compact support, and 
has absolutely continuous conditional measures on unstable manifolds.
Moreover,
Newhouse gave  the conjecture in  \cite{N2}:
\begin{itemize}
\item for each
parameter $b$, there is a residual set of parameters $a$ such that $f_{a,b}$ has no SRB measure.
\end{itemize}
On the other hand,
Benedicks and Young \cite{BY} showed 
\begin{itemize}
\item 
 for almost every positive $b$ near $0$,
there is  a positive Lebesgue measure set $A_b$ of $a$-parameters such that $f_{a,b}$ has an SRB measure supported by the homoclinic set of a fixed point of $f_{a,b}$ if $a\in A_b$. 
\end{itemize}

Following the situation of admitting versus non-admitting of SRB 
measures, we will show that these two conditions coexist for 
many parameter values in the original H\'enon family.
For convenience in our arguments, we adopt the following topologically conjugated formula of the H\'enon map $f_{a,b}$: 
$$
\varphi_{a,b}(x,y)=(y, a-bx+y^2) 
$$
which is obtained from the classical formula (\ref{original}) by the 
 reparametrization $(a,b)\mapsto(-a,-b)$ and the coordinate change $(x,y)\mapsto (-ab^{-1}y, -ax)$.
Note that $p_{a,b}=(y_{a,b},y_{a,b})\in \mathbb{R}^2$ with $y_{a,b}=(1+b+\sqrt{(1+b)^2-4a})/2$ 
is a fixed point of $\varphi_{a,b}$.
\medskip

Now we state our main results.

\begin{mtheorem}\label{thm_A}
There exists an open interval $I$ containing $0$ such that, for any $b\in I\setminus \{0\}$, 
there is a positive integer $w$ and a closed interval $J_b$  in the $a$-parameter space satisfying 
the following {\rm (i)} and {\rm (ii)}.
\begin{enumerate}[\rm (i)]
\item
For any $a\in J_b$, $\varphi_a:=\varphi^w_{a,b}$ has continuations of two basic sets 
$\Lambda^{\mathrm{out}}_a$ and $\Lambda^{\mathrm{in}}_a$ with $p_{a,b}\in \Lambda^{\mathrm{out}}_a$ 
such that there exist persistent quadratic tangencies of $W^u(\Lambda^{\mathrm{out}}_a)$ and $W^s(\Lambda^{\mathrm{in}}_a)$ which unfold generically with respect to the one-parameter family $\{\varphi_a\}_{a\in J_b}$.
\item
There is a dense subset $J'$ of $J_b$ such that, for any $\hat a\in J'$, 
$W^u(p_{\hat a,b})$ and $W^s(p_{\hat a,b})$ have a quadratic tangency $q_{\hat a}$ which unfolds generically 
with respect to $\{\varphi_a\}_{a\in J_b}$.
\end{enumerate}
\end{mtheorem}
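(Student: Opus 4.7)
My approach is to reduce to the one-dimensional quadratic family via the singular limit $b \to 0$, identify a parameter $a^*$ where the critical orbit is eventually fixed (a Misiurewicz-type parameter), and then deform to $b \neq 0$ using hyperbolic continuation combined with Newhouse-style thickness estimates. For $b=0$ the map $\varphi_{a,0}(x,y)=(y, a+y^2)$ degenerates to the quadratic family $q_a(y)=a+y^2$, and the fixed point $p_{a,0}=(y_{a,0},y_{a,0})$ corresponds to the repelling fixed point of $q_a$. At $a^* = -2$ the critical orbit $0 \mapsto -2 \mapsto 2$ lands on the repelling fixed point $y_{a^*,0}=2$, and $q_{a^*}$ is conjugate on an invariant interval to the full $2$-shift, so some iterate $q_{a^*}^w$ carries a hyperbolic horseshoe containing $2$.

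For $|b|$ sufficiently small and $a$ in a neighborhood of $a^*$, the continuation theorem for basic sets should yield a hyperbolic set $\Lambda^{\mathrm{out}}_a$ for $\varphi_a = \varphi_{a,b}^w$ containing the continuation $p_{a,b}$ of the fixed point, together with a second basic set $\Lambda^{\mathrm{in}}_a$ obtained from further iterates associated with the image of the critical region. The folded pieces of $W^u(\Lambda^{\mathrm{out}}_a)$ (reflecting the parabolic nonlinearity) meet the nearly vertical leaves of $W^s(\Lambda^{\mathrm{in}}_a)$ in quadratic tangencies at parameters forcing the one-dimensional critical orbit onto the continuation of the fixed orbit. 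Setting up local normal-form coordinates near such a tangency and computing the derivative of its location in $a$, I expect to find that the tangency moves transversally with nonzero speed; this gives the generic unfolding in item (i) and, combined with the stable/unstable thickness of the horseshoes, yields persistence over a closed interval $J_b$ around $a^*$ via Newhouse's thickness criterion.

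Item (ii) would then follow from part (i) by a standard density argument: because $p_{a,b} \in \Lambda^{\mathrm{out}}_a$ and $\Lambda^{\mathrm{in}}_a$ is transitive, the iterated images and preimages of stable and unstable leaves of the two basic sets accumulate densely on $W^s(p_{a,b})$ and $W^u(p_{a,b})$ respectively. Consequently, for each $a_0 \in J_b$ carrying a tangency of $W^u(\Lambda^{\mathrm{out}}_{a_0})$ with $W^s(\Lambda^{\mathrm{in}}_{a_0})$, a small adjustment of $a$ controlled by the generic-unfolding data from (i) produces a nearby parameter $\hat a$ at which the tangency lies on both $W^u(p_{\hat a,b})$ and $W^s(p_{\hat a,b})$, giving the dense subset $J' \subset J_b$.

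The main obstacle, in my view, is carrying out the continuation in the second step \emph{uniformly} down to $b = 0$: at $b=0$ the Jacobian vanishes and one Lyapunov direction degenerates, so ordinary persistence of hyperbolic sets does not close up uniformly in $(a,b)$ as $b \to 0$. Overcoming this seems to require carefully chosen rescaling coordinates (in the spirit of Benedicks--Carleson or Mora--Viana) in which the basic sets, their invariant manifolds, and the quadratic folds all admit $b$-uniform estimates. Once such estimates are in place, the implicit-function computation of the tangency curves in $(a,b)$-space and the verification of generic unfolding should be a routine transversality check.
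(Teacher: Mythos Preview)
Your overall shape is right---start from the one-dimensional limit at $a^*=-2$, locate a first homoclinic tangency of the fixed point, and use thickness to get persistence---but the mechanism you propose for producing the second basic set $\Lambda^{\mathrm{in}}_a$ and for controlling its thickness is not the one that actually works, and the obstacle you single out is not the real difficulty.

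The paper does \emph{not} attempt to continue two horseshoes uniformly from $b=0$, and it avoids any Benedicks--Carleson/Mora--Viana style estimates. Instead, it first uses the Implicit Function Theorem to produce, for each small $b\neq 0$, a parameter $a=h(b)$ at which $\varphi_{h(b),b}$ has a generically unfolding homoclinic tangency of $p_{h(b),b}$; the outer horseshoe $\Lambda^{\mathrm{out}}_a$ is then a standard Smale horseshoe built from a transverse homoclinic point of $p_{a,b}$ near this tangency (so its thickness is bounded below by some $c>0$). The inner set $\Lambda^{\mathrm{in}}_a$ is \emph{not} obtained by continuation from $b=0$: it is produced by renormalizing $\varphi_a^{N+n}$ near the tangency $q_{h(b),b}$ (\`a la Palis--Takens/Romero), so that the return map is $C^2$-close to the quadratic endomorphism $(\bar y,\bar y^2+\bar a)$. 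In these renormalized coordinates one gets, for each $m$, a horseshoe $\Lambda^m_{\bar a,n}$ whose unstable thickness can be made as large as one likes by choosing $m$ large; this is what forces the thickness product $>1$. Your proposal never explains why the two thicknesses should multiply to more than $1$, and ``further iterates associated with the image of the critical region'' does not supply such a mechanism.

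A second gap concerns generic unfolding of the \emph{persistent} tangencies. Knowing that the initial tangency at $a=h(b)$ unfolds generically does not by itself imply that every tangency produced later by the Gap Lemma unfolds generically. The paper handles this via an explicit velocity computation (the Accompanying Lemma): in renormalized coordinates the leaves of $W^s(\Lambda^{\mathrm{in}}_a)$ move with speed $\approx -1/3$ and those of $W^u(\Lambda^{\mathrm{out}}_a)$ with speed $\approx -3$ in $\bar a$, so their relative speed is bounded away from zero uniformly. Your ``routine transversality check'' would need something of this sort, and it is not automatic.

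Finally, for (ii) your density argument is incomplete: accumulation of $W^{u/s}(\Lambda^{\mathrm{out}}_a)$ and $W^{s/u}(\Lambda^{\mathrm{in}}_a)$ on $W^{u/s}(p_{a,b})$ requires the two basic sets to be homoclinically related in \emph{both} directions, i.e., to form a cycle. The paper establishes the reverse transverse intersection $W^s(\Lambda^{\mathrm{out}}_a)\pitchfork W^u(\Lambda^{\mathrm{in}}_a)$ separately and then invokes Newhouse's lemma to merge them into a single basic set containing $p_{a,b}$; only after this can one perturb a heteroclinic tangency to a homoclinic one for $p_{a,b}$ and again apply the Accompanying Lemma to verify generic unfolding.
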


See Section \ref{s_persistent} for the definition of persistent quadratic
tangency unfolding generically. 
A more detailed version of Theorem \ref{thm_A} (i) and (ii)
is stated as Theorems \ref{persistent tangency} and \ref{l_0}, respectively.
Together with results of
\cite{MV,BY,PT,R,N2,WY}, 
the two theorems also imply the
following.

%Once Theorem \ref{thm_A} is accomplished, Theorem \ref{thm_B} follows from results of \cite{MV,BY,PT,R,N2,WY} and so on.

\begin{mtheorem}\label{thm_B}
For any $b\in I\setminus \{0\}$, the interval $J_b$ given in Theorem \ref{thm_A} contains subsets $A_b^{(1)}$, $A_b^{(2)}$, $A_b^{(3)}$ satisfying 
the following conditions.
\begin{enumerate}[\rm (i)]
\item
$A_b^{(1)}$ is open dense in $J_b$ and, for any $a\in A^{(1)}_b$, $\varphi_{a,b}$ does not have an SRB measure supported by the homoclinic set of $p_{a,b}$.
\item
$A_b^{(2)}$ is a residual subset of $J_b$ with $A_b^{(2)}\subset A_b^{(1)}$ and, for any $a\in A^{(2)}_b$, $\varphi_{a,b}$ has infinitely many sinks.
\item
$A_b^{(3)}$ has Lebesgue measure positive everywhere in $J_b$ and, for any $a\in A^{(3)}_b$, $\varphi_{a,b}$ has an SRB measure supported by an H\'{e}non-like strange attractor.
\end{enumerate}
\end{mtheorem}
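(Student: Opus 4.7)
The plan is to deduce each of the three conclusions by combining Theorem A with the previous results of Newhouse, Robinson, Mora--Viana, Benedicks--Young and Wang--Young cited in the excerpt.

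For part (i), Theorem A(i) supplies, for every $a\in J_b$, basic sets $\Lambda^{\mathrm{out}}_a\ni p_{a,b}$ and $\Lambda^{\mathrm{in}}_a$ together with persistent quadratic tangencies of $W^u(\Lambda^{\mathrm{out}}_a)$ and $W^s(\Lambda^{\mathrm{in}}_a)$ that unfold generically. Generic unfolding together with the $\lambda$-lemma yields transverse heteroclinic intersections between these two basic sets, so $\Lambda^{\mathrm{out}}_a$ and $\Lambda^{\mathrm{in}}_a$ are homoclinically related and the homoclinic set of $p_{a,b}$ contains a homoclinic tangency. Since $|\det d\varphi_{a,b}|=|b|$ is small, the saddle $p_{a,b}$ is dissipative. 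Newhouse's \cite[Theorem~1.4]{N2} then rules out an SRB measure on the homoclinic set of $p_{a,b}$ for a residual set of $a\in J_b$; to upgrade this to the open dense set $A_b^{(1)}$ one proves, as an additional openness lemma, that in the presence of the persistent tangencies from Theorem A(i) the failure of an SRB measure on the homoclinic set of $p_{a,b}$ is preserved under small perturbation of $a$.

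For part (ii), I would apply the parametric version of Newhouse's infinite-sinks theorem due to Robinson \cite{R}, based on Newhouse \cite{N1} and Palis--Takens \cite{PT}. By Theorem A(i) the family $\{\varphi_a\}_{a\in J_b}$ satisfies the required hypothesis of persistent quadratic tangencies unfolding generically, so there is a residual subset $S_b\subset J_b$ for which $\varphi_{a,b}$ has infinitely many sinks. Setting $A_b^{(2)}:=A_b^{(1)}\cap S_b$ and using the open density of $A_b^{(1)}$ gives a residual subset of $J_b$, contained in $A_b^{(1)}$, on which $\varphi_{a,b}$ has infinitely many sinks.

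For part (iii), Theorem A(ii) provides a dense subset $J'\subset J_b$ at each point $\hat a$ of which $W^u(p_{\hat a,b})$ and $W^s(p_{\hat a,b})$ have a quadratic homoclinic tangency unfolding generically in $\{\varphi_a\}$. Mora--Viana \cite{MV} and Wang--Young \cite{WY} then furnish, in every neighborhood of $\hat a$ in $J_b$, a positive Lebesgue-measure set of parameters $a$ for which $\varphi_{a,b}$ has a H\'enon-like strange attractor carrying an SRB measure (the SRB property following from Benedicks--Young \cite{BY}). Density of $J'$ in $J_b$ promotes this local statement to positive Lebesgue measure in every subinterval of $J_b$, producing $A_b^{(3)}$. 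The main obstacle is precisely the strengthening in part (i) from a residual subset, as supplied directly by \cite{N2}, to the open dense set $A_b^{(1)}$; this is not a formal consequence of the cited results and will require the openness argument sketched above, whereas the remaining parts reduce to combining the tangencies from Theorem A with off-the-shelf applications of \cite{MV,BY,PT,R,N1,N2,WY}.
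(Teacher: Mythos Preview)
Your approach to parts (ii) and (iii) is essentially the same as the paper's, but your proof of (i) has a genuine gap, and this is precisely the step the paper handles differently.

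First, Newhouse's Theorem~1.4 in \cite{N2} yields a residual subset of $\mathrm{Diff}^r(M)$, not of the parameter interval $J_b$; the intersection of that residual set with the fixed one-parameter family $\{\varphi_a\}_{a\in J_b}$ could a priori be empty, so you cannot simply read off a residual subset of $J_b$. Second, and more seriously, the ``openness lemma'' you propose---that absence of an SRB measure on the homoclinic set of $p_{a,b}$ persists under perturbation of $a$---is not a standard fact and you give no argument for it. There is no obvious reason why ``no SRB measure'' should be an open condition.

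The paper avoids both problems by a different mechanism. It uses Theorem~A(ii), not (i): for each $\hat a$ in the dense set $J'$ one has a \emph{homoclinic} quadratic tangency of $W^u(p_{\hat a})$ and $W^s(p_{\hat a})$ unfolding generically. Renormalizing near this tangency and invoking Robinson \cite[Proposition~3.3]{R} and Newhouse \cite[Lemma~2.2]{N2} produces an open subinterval $Y_{\hat a}\subset J_b$, arbitrarily close to $\hat a$, on which $\varphi_a$ has a sink $r$ whose basin $B_r$ meets $W^u(p_a)$. Both ``having a sink'' and ``$B_r\cap W^u(p_a)\neq\emptyset$'' are manifestly open conditions, so the union over $\hat a\in J'$ gives an open dense $A_b^{(1)}$. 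The no-SRB conclusion then follows pointwise: by \cite[Proposition~2.1]{N2} an SRB measure supported on the homoclinic set of $p_a$ would have support equal to $\mathrm{Cl}(W^u(p_a))$, which is impossible since $W^u(p_a)\cap B_r$ contains an arc while invariance forces $\mathrm{Cl}(W^u(p_a))\cap B_r\subset\{r\}$. Thus the open-dense property comes from the sink, not from the absence of SRB measure itself. This is the key idea you are missing.
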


Note that the H\'{e}non-like strange attractor given in Theorem \ref{thm_B}~(iii) is a small 
invariant set which  arises from renormalization near the tangency $q_{\hat a}$ of 
Theorem \ref{thm_A}, 
while the homoclinic set in Theorem \ref{thm_B} (i) is a large invariant set.

We say that a subset $A$ of an interval $J$ has \emph{Lebesgue measure positive everywhere} 
 if, for any non-empty open subset $U$ of $J$, 
$A\cap U$ has positive Lebesgue measure. From the definition, we know that such a set $A$ is dense in $J$.
Also, an invariant set $\Omega$ of $\varphi_{a,b}$ is called a \emph{strange attractor} if (a) there exists a saddle point $p\in \Omega$ such that the unstable manifold $W^u(p)$ has dimension $1$ and $\mathrm{Cl}(W^u(p))=\Omega$, (b) there exists an open neighborhood $U$ of $\Omega$ such that $\{f^n(U)\}_{n=1}^\infty$ is a decreasing sequence with $\Omega=\bigcap_{n=1}^\infty f^n(U)$, and (c) there exists a point $z_0\in \Omega$ whose positive orbit is dense in $\Omega$ and a non-zero vector $v_0\in T_{z_0}(\mathbb{R}^2)$ with $\Vert d\varphi^n_{z_0}(v_0)\Vert \geq e^{cn}\Vert v_0\Vert$ for any integer $n\geq 0$ and some constant $c>0$.

The conditions (i) and (iii) of Theorem \ref{thm_B} imply that the intersection 
$\tilde A_b=A_b^{(1)}\cap A_b^{(3)}$ is a dense subset of $J_b$ satisfying the following conditions. 

\begin{mcorollary}\label{cor_C}
For any $b\in I\setminus  \{0\}$, there exists a subset $\tilde A_b$ of $J_b$ which has Lebesgue measure positive everywhere in $J_b$ and such that, for any $a\in \tilde A_b$, $\varphi_{a,b}$ does not have an SRB measure supported by the homoclinic set of $p_{a,b}$ but has an SRB measure supported by a strange attractor.
\end{mcorollary}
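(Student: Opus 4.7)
The plan is to take $\tilde A_b := A_b^{(1)} \cap A_b^{(3)}$, exactly as flagged in the paragraph immediately preceding the statement, and then verify two things: that this intersection has Lebesgue measure positive everywhere in $J_b$, and that every $a$ in it has both required dynamical properties.

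The second verification is essentially immediate once the set is defined. By Theorem \ref{thm_B}~(i), membership of $a$ in $A_b^{(1)}$ already rules out an SRB measure supported by the homoclinic set of $p_{a,b}$; by Theorem \ref{thm_B}~(iii), membership in $A_b^{(3)}$ supplies an SRB measure supported by a H\'enon-like strange attractor. These two conclusions speak about different invariant sets, so no compatibility check is needed: the small renormalized attractor arises near a tangency $q_{\hat a}$ and is disjoint from (in particular, unrelated to) the large homoclinic set of $p_{a,b}$ whose SRB measure is being excluded.

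The main content is then the measure-theoretic check. Let $U$ be an arbitrary non-empty open subset of $J_b$. Since $A_b^{(1)}$ is open and dense in $J_b$ by Theorem \ref{thm_B}~(i), the set $V := U \cap A_b^{(1)}$ is again a non-empty open subset of $J_b$. Applying the ``positive everywhere'' property of $A_b^{(3)}$ to the open set $V$ (Theorem \ref{thm_B}~(iii)) yields
$$
\mathrm{Leb}\bigl(U \cap \tilde A_b\bigr) \;=\; \mathrm{Leb}\bigl(A_b^{(3)} \cap V\bigr) \;>\; 0,
$$
which is exactly the conclusion of the corollary. As a bonus, density of $\tilde A_b$ in $J_b$ follows from the very definition of ``Lebesgue measure positive everywhere''.

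There is no genuine obstacle at this level: Corollary \ref{cor_C} is a short set-theoretic combination of the two parts of Theorem \ref{thm_B}, exploiting only the elementary fact that an open dense set meets every non-empty open set. The real difficulty is concentrated in the proof of Theorem \ref{thm_B} itself, where Theorem \ref{thm_A}~(i) is combined with the Newhouse residual result from \cite{N2} to make $A_b^{(1)}$ open dense, while Theorem \ref{thm_A}~(ii) is fed into the Mora--Viana/Benedicks--Young/Wang--Young machinery \cite{MV,BY,WY} at generically unfolding quadratic tangencies to guarantee that $A_b^{(3)}$ has positive Lebesgue measure in every subinterval of $J_b$.
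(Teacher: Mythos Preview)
Your proposal is correct and follows exactly the approach the paper takes: the paper's ``proof'' is the single sentence preceding the corollary, namely that $\tilde A_b=A_b^{(1)}\cap A_b^{(3)}$ works by Theorem~\ref{thm_B}\,(i) and (iii), and you have simply spelled out the elementary measure-theoretic verification that this intersection has Lebesgue measure positive everywhere (using that $A_b^{(1)}$ is open dense so $U\cap A_b^{(1)}$ is a non-empty open set to which the hypothesis on $A_b^{(3)}$ applies).
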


We finish Introduction by outlining the proofs of Theorems \ref{thm_A} and \ref{thm_B}.
Note that 
a difficulty in our proof is that we need to find desired diffeomorphisms in the \emph{fixed two}-parameter family $\{\varphi_{a,b}\}$ but  \emph{not} a neighborhood of the family in the infinite dimensional space $\mathrm{Diff}^\infty (\mathbb{R}^2)$.
See Section \ref{S_2}. 
Our key mechanism for overcoming it is the \emph{double renormalization} for the two-parameter family.
Most of our effort is devoted to detecting such renormalizations in Section \ref{S_4}.

By using the Implicit Function Theorem, we will define a smooth function $h:I\to \mathbb{R}$ such that, for any $b\in I$, $\varphi_{h(b),b}$ has a homoclinic quadratic tangency $q_{h(b),b}$ near the point $(-2,2)\in \mathbb{R}^2$ unfolding generically with respect to the $a$-parameter family $\{\varphi_{a,b(\mathrm{fixed})}\}$ (Proposition \ref{tangency}).
Then, one can renormalize $\{\varphi_a\}$ with $\varphi_a:=\varphi_{a,b}^w$ in a neighborhood of $q_{h(b),b}$ 
 as in \cite{PT}, 
where $w>0$ is the even integer given in Section \ref{S_2}.
Then, by the Thickness Lemma \cite{N0,PT}, there exists a closed interval $J_b$ such that the one-parameter family $\{\varphi_a\}_{a\in J_b}$ has persistent heteroclinic quadratic tangencies $q_a$ $(a\in J_b)$, and moreover Accompanying Lemma (Lemma \ref{accompany}) implies that all these tangencies unfold generically with respect to $\varphi_a$ (Theorem \ref{persistent tangency}).
By using these results, we also show that $J_b$ contains a dense subset $J'$ such that, for any $\hat a\in J'$,  $\varphi_{\hat a}$ has a homoclinic tangency $q_{\hat a}$ associated to the fixed point $p_{\hat a}:=p_{\hat a,b}$ which also unfolds generically (Theorem \ref{l_0}).
Obviously, Theorems \ref{persistent tangency} and \ref{l_0} imply Theorem \ref{thm_A}.

One can renormalize $\{\varphi_a\}$ again near the tangency $q_{\hat a}$ given 
in Theorem \ref{thm_A}~(ii) for any $\hat a\in J'$.
Applying then standard arguments of Robinson \cite{R} and Newhouse \cite{N2} to our situation, we have an open dense subset $A_b^{(1)}$ of $J_b$ such that $\varphi_a$ has no SRB measure supported by the homoclinic set of $p_{a,b}$ for any $a\in A_b^{(1)}$, and a residual subset $A_b^{(2)}$ of $J_b$ with $A_b^{(2)}\subset A_b^{(1)}$ such that $\varphi_a$ has infinitely many sinks for any $a\in A_b^{(2)}$.
Moreover, applying the results of Wang-Young \cite{WY} to the renormalized maps, we have a dense subset $A_b^{(3)}$ of $J_b$ with Lebesgue measure positive everywhere and such that $\varphi_a$ has a strange attractor supporting an SRB measure if $a\in A_b^{(3)}$.
These results prove Theorem \ref{thm_B}.

\section{Preliminaries}\label{S_1}

First of all, we will review briefly some notations and definitions needed in later sections.
Throughout this section, we suppose that $\{\psi_t\}_{t\in J}$ is a one-parameter family in $\mathrm{Diff}^r(\mathbb{R}^2)$ with $r\geq 3$ such that the parameter space $J$ is an interval.
A family $\{A_t\}_{t\in J}$ of $\psi_t$-invariant subsets of $\mathbb{R}^2$ is called a $t$-\emph{continuation} (or shortly
\emph{continuation}) if, for any $t\in J$ and some $t_0\in J$, there exist homeomorphisms 
$h_t:A_{t_0}\to A_t$ depending on $t$ continuously  such that $h_{t_0}$ is the identity of $A_{t_0}$ and 
$h_t\circ \psi_{t_0}|_{A_{t_0}}=\psi_t|_{A_t}\circ h_t$.

\subsection{Thickness of Cantor sets}

We recall the definition of the thickness given in Newhouse \cite{N1} and Palis-Takens \cite{PT} for a Cantor set $K$ contained in an interval $I$.
A \textit{gap} of $K$ is a connected component of $I\setminus K$ which does not contain a boundary point of $I$.
Let $G$ be a gap and $p$ a boundary point of $G$.
A closed interval $B\subset  I$ is called the \textit{bridge} at $p$ if $B$ is the maximal interval with $G\cap B=\{p\}$ such that $B$ does not intersect any gap whose length is at least that of $G$. 
The  \textit{thickness}  of  $K$ at $p$ is defined by  $\tau(K,p)= \mathrm{Length}(B)/\mathrm{Length}(G)$.
The \textit{thickness} $\tau(K)$ of $K$ is the infimum  over these $\tau(K,p)$ for all boundary points $p$ of gaps of $K$.
Let $K_1,K_2$ be two Cantor sets in $I$ with thickness $\tau_1$ and $\tau_2$ respectively.
Then, Gap Lemma (see \cite{N1,PT}) shows that, if $\tau_1\cdot \tau_2>1$, then either $K_1$ is contained in a gap of $K_2$, or $K_2$ is contained in a gap of $K_1$, or $K_1\cap K_2\neq \emptyset$.
The thickness of a Cantor subset $K$ of a $C^1$ curve $\alpha$ in $\mathbb{R}^2$ is defined similarly by supposing that $\alpha$ is parametrized by arc-length.

\subsection{Persistent quadratic tangencies}\label{s_persistent}

A \textit{basic set} of $\psi_t$ is a non-trivial compact transitive hyperbolic invariant set of $\psi_t$ with a dense subset of periodic orbits.
Suppose that there exist continuations $\{\Lambda_{1,t}\}_{t\in J}$, $\{\Lambda_{2,t}\}_{t\in J}$ of basic sets or saddle fixed points of $\psi_t$ such that $W^s(\Lambda_{1,t_0})$ and $W^u(\Lambda_{2,t_0})$ have a {\it quadratic tangency} $q_{t_0}$ for a $t_0\in J$.
That is, one can choose a coordinate $(x,y)$ on a neighborhood $O$ of $q_{t_0}$ with $q_{t_0}=(0,0)$ and such that
$$L^s_{t_0}\cap O=\{(x,y):\, y=0\}\quad\mbox{and}\quad L^u_{t_0}\cap O=\{(x,y):\, y=ax^2\}$$
for some constant $a\neq 0$, where $L^s_t,L^u_t$ are arcs in $W^s(\Lambda_{1,t})$ and $W^u(\Lambda_{2,t})$ respectively 
which depend on $t$ continuously.
The tangency is called {\it homoclinic} (resp.\ {\it heteroclinic}) if $\Lambda_{1,t_0}=\Lambda_{2,t_0}$ (resp.\ $\Lambda_{1,t_0}\neq \Lambda_{2,t_0}$).
One can choose these $L^s_t,L^u_t$ so that they vary $C^{r-2}$ with respect to $t$, for example see 
Pollicott \cite[Propositions 1 and 2]{Pol} and references therein.
Notice that the assumption $r\geq 3$ continues to be used.

\begin{definition}\label{d_unfold}
The quadratic tangency $q_{t_0}$ {\it unfolds generically} with respect to $\{\psi_t\}_{t\in J}$ if there exist local coordinates on $O$ $C^3$ depending on $t$ and a $C^1$ function $b$ on $J$ satisfying the following conditions.
\begin{itemize}
\item
$L^s_t\cap O$ is given by $y=0$ and $L^u_t\cap O$ by $y=ax^2+b(t)$ for any $t$ near $t_0$.
\item
$b(t_0)=0$ and $\displaystyle \frac{db}{dt}(t_0)\neq 0$.
\end{itemize}
\end{definition}

The family $\{\psi_t\}_{t\in J}$ is said to have \textit{persistent quadratic tangencies unfolding generically} if, for any $t_1\in J$, $W^s(\Lambda_{1,t_1})$ and $W^u(\Lambda_{2,t_1})$ have a quadratic tangency $q_{t_1}$ unfolding generically with respect to $\{\psi_t\}$.

\subsection{Compatible foliations}

Let $\mathcal{F}$ be a foliation consisting of smooth curves in the plane.
A smooth curve $\sigma$ in the plane is said to {\it cross} $\mathcal{F}$ {\it exactly} if each leaf of $\mathcal{F}$ intersects $\sigma$ transversely in a single point and any point of $\sigma$ is passed through by a leaf of $\mathcal{F}$.

Suppose that $\{\Lambda_t\}$ is a continuation of non-trivial basic sets of $\psi_t$ and $\{p_t\}$ is a continuation of saddle fixed points in $\Lambda_t$.
Let $\{I_t\}$ be a set of curves in $W^s_{\mathrm{loc}}(p_t)$ which are shortest among curves in $W^s_{\mathrm{loc}}(p_t)$ containing $\Lambda_t\cap W^s_{\mathrm{loc}}(p_t)$ and depends on $t$ continuously.
According to Lemma 4.1 in Kan-Ko\c{c}ak-Yorke \cite{KKY} based on results in Franks \cite{Fr}, there exists a $t$-parameter 
family of foliations $\mathcal{F}^u_t$ in $\mathbb{R}^2$ satisfying the following conditions.
Such foliations are said to be {\it compatible with} $W_{\mathrm{loc}}^u(\Lambda_t)$.
\begin{enumerate}[\rm (i)]
\item
Each leaf of $W^u_{\mathrm{loc}}(\Lambda_t)$ is a leaf of $\mathcal{F}^u_t$.
\item
$I_t$ crosses $\mathcal{F}^u_t$ exactly.
\item
Leaves of $\mathcal{F}^u_t$ are $C^3$ curves such that themselves, their directions, and their curvatures vary $C^1$ with respect to any transverse direction and $t$.
\end{enumerate}
Similarly, there exist foliations $\mathcal{F}^s_t$ {\it compatible with} $W^s_{\mathrm{loc}}(\Lambda_t)$.
A leaf of $\mathcal{F}^{u/s}_t$ is said to be a \textit{$\Lambda_t$-leaf} if the leaf is contained in $W^{u/s}(\Lambda_t)$.

\subsection{Accompanying Lemma}\label{s_accompany}

We still work with the notation and situation as in the previous subsections.
Accompanying Lemma given below is used to show that some quadratic tangencies $q_t$ unfold generically.

Suppose that there exists a continuation of saddle fixed points $\hat p_t$ of $\psi_t$ other than $p_t$ such that $W^s(\hat p_t)\setminus \{\hat p_t\}$ has a subarc crossing $\mathcal{F}_t^{u(k_0)}:=\psi_t^{k_0}(\mathcal{F}^u_t)$ exactly for some integer $k_0\geq 0$.
Let $\sigma$ be an oriented short segment in $\mathbb{R}^2$ meeting $W^u(\hat p_t)\setminus \{\hat p_t\}$ almost orthogonally in a single point of $\mathrm{Int}(\sigma)$, 
which is denoted by $c_{t}$. 
The Inclination Lemma implies that $W^u_{\mathrm{loc}}(\hat p_t)$ is contained in a small neighborhood of $\mathcal{F}^{u(k_0+j)}_t$ for all sufficiently large integer $j>0$, see Fig.\ \ref{fg_1}. In particular, $\sigma$ contains an arc crossing $\mathcal{F}^{u(k_0+j)}_t$ exactly.
%%%%%%%%%%%%%%%%%%%%%%%%%%%%%%%%%%%%%%%%%%%%%%%%%%%%%%%
\begin{figure}[hbtp]
\centering
\scalebox{0.65}{\includegraphics[clip]{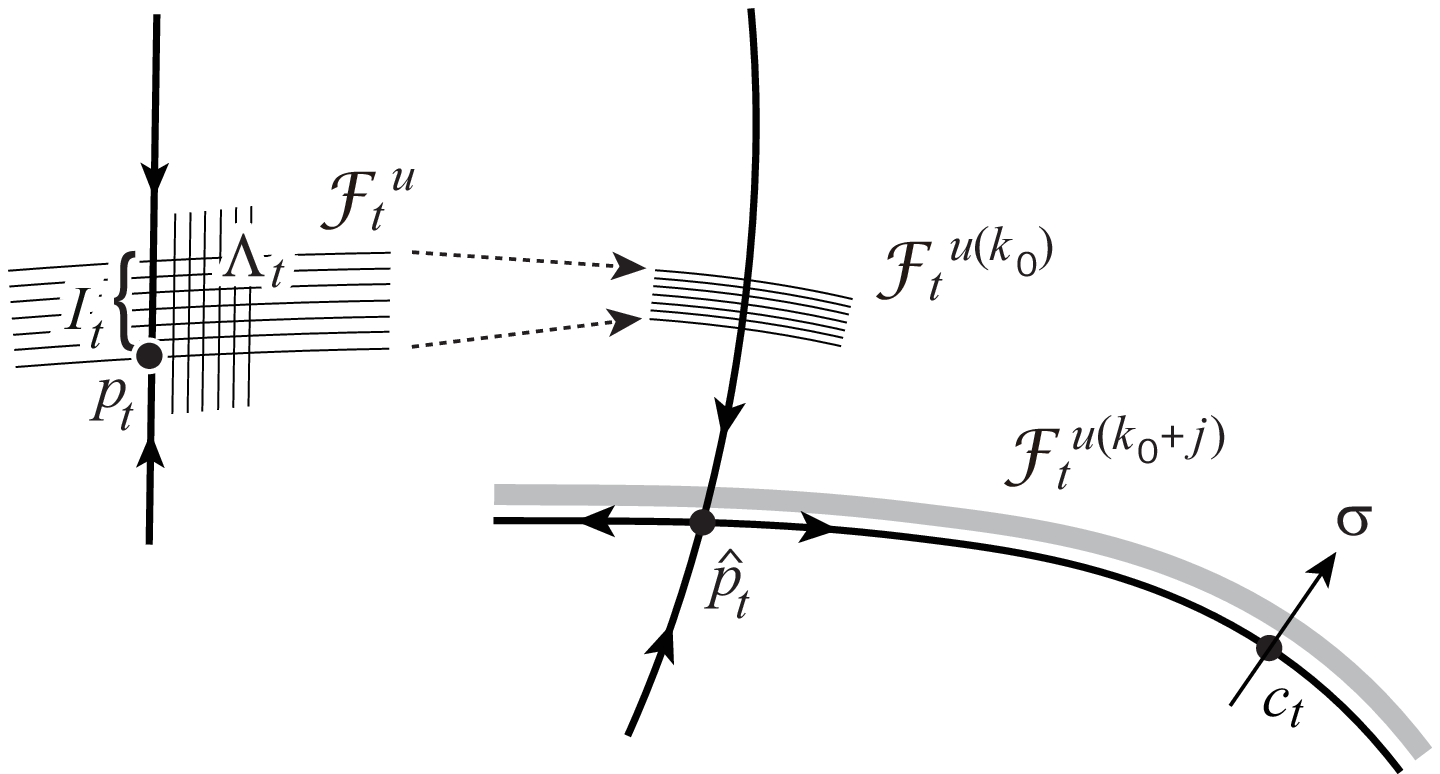}}
\caption{}
\label{fg_1}
\end{figure}
%%%%%%%%%%%%%%%%%%%%%%%%%%%%%%%%%%%%%%%%%%%%%%%%%%%%%%%

Consider an orientation-preserving arc-length parametrization $\alpha:[v_0,v_1]\to \sigma$ independent of $t$.
Let $v:J\to [v_0,v_1]$ be a $C^1$ function such that $\alpha(v(t))$ is contained in a $\Lambda_t$-leaf of 
$\mathcal{F}_t^{u(k_0+j)}$, 
and let $c:J\to [v_0,v_1]$ be a $C^1$ function satisfying $\alpha(c(t))=c_{t}$.

The following lemma is given in Kiriki-Soma \cite[Lemma 4.1]{KS1} and the proof is in \cite[Appendix A]{KS1}.

\begin{lemma}[Accompanying Lemma]\label{accompany}
For any $\delta>0$ and $t_0\in \mathrm{Int}(J)$, there exists an integer $j_0>0$ and a number $\varepsilon>0$ such that 
any $C^1$ function $v$ as above satisfies
\begin{equation}\label{accomp}
\Bigl\vert \frac{d  v}{d t}(t)-\frac{dc}{dt}(t_0)\Bigr\vert <\delta\end{equation}
if $j\geq j_0$ and $|t-t_0|<\varepsilon$.
\end{lemma}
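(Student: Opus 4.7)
The plan is to apply a parameter-dependent form of the Inclination ($\lambda$-)Lemma. The hypothesis that a subarc of $W^s(\hat p_t)\setminus\{\hat p_t\}$ crosses $\mathcal{F}_t^{u(k_0)}$ exactly means in particular that these leaves meet $W^s_{\mathrm{loc}}(\hat p_t)$ transversally. Iterating by $\psi_t$, the standard $C^k$ Inclination Lemma (for $k\leq r-2$ with $r\geq 3$, so certainly $k=1$) forces the pieces of $\mathcal{F}_t^{u(k_0+j)}=\psi_t^{k_0+j}(\mathcal{F}_t^u)$ lying near $c_t$ to converge, as $j\to\infty$, to $W^u_{\mathrm{loc}}(\hat p_t)$ in the $C^1$ topology. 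Because $\hat p_t$ is a saddle fixed point depending $C^r$-smoothly on $t$, and because compatible foliations vary $C^1$ with respect to both $t$ and the transverse direction (property (iii) of the definition), this convergence can be made uniform in $t$ on any compact sub-interval of $J$.

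Fix $t_0\in\mathrm{Int}(J)$ and choose a small closed sub-interval $[t_0-\varepsilon_0,t_0+\varepsilon_0]\subset J$. Since $\sigma$ meets $W^u(\hat p_t)$ transversally (and almost orthogonally) at $c_t=\alpha(c(t))$, the Implicit Function Theorem provides, for each sufficiently large $j$, a $C^1$ function $\tilde c_j:[t_0-\varepsilon_0,t_0+\varepsilon_0]\to [v_0,v_1]$ such that $\alpha(\tilde c_j(t))$ is the intersection point of $\sigma$ with the $\Lambda_t$-leaf of $\mathcal{F}_t^{u(k_0+j)}$ that $v$ follows. The transversality at $c_t$, together with the uniform $C^1$-closeness of the leaf to $W^u_{\mathrm{loc}}(\hat p_t)$, translates into $C^1$-closeness of $\tilde c_j$ to $c$ on $[t_0-\varepsilon_0,t_0+\varepsilon_0]$; hence $d\tilde c_j/dt\to dc/dt$ uniformly there as $j\to\infty$. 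By continuity the given $C^1$ function $v$ must coincide locally with some such $\tilde c_j$, so for a sufficiently large $j_0$ we obtain $|dv/dt(t)-dc/dt(t)|<\delta/2$ for all $j\geq j_0$ and $|t-t_0|\leq\varepsilon_0$.

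To finish, because $c$ is of class $C^1$, shrinking $\varepsilon\leq\varepsilon_0$ we may arrange $|dc/dt(t)-dc/dt(t_0)|<\delta/2$ on $[t_0-\varepsilon,t_0+\varepsilon]$. Adding the two estimates gives inequality (\ref{accomp}).

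The main obstacle is the parameter-uniform Inclination Lemma itself: one must show that the iterates $\psi_t^j(\ell)$ of a leaf $\ell$ of $\mathcal{F}_t^{u(k_0)}$ transverse to $W^s_{\mathrm{loc}}(\hat p_t)$ approach $W^u_{\mathrm{loc}}(\hat p_t)$ in $C^1$ not merely pointwise in $t$ but uniformly on a compact parameter interval, with the rate independent of the choice of leaf in the relevant family. This requires uniform hyperbolic estimates near $\hat p_t$, $C^r$ smoothness of its local invariant manifolds in $t$, and the regularity of the compatible foliations recorded in Section \ref{s_persistent}. Once this uniformity is secured, everything else reduces to routine applications of the Implicit Function Theorem and continuity of $dc/dt$.
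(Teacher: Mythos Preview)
The paper does not prove this lemma at all; immediately before the statement it says the result and its proof are in Kiriki--Soma \cite[Lemma 4.1 and Appendix A]{KS1}. So there is no in-paper argument to compare against, and your sketch has to be assessed on its own.

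Your strategy---a parametric $\lambda$-lemma forcing the $\Lambda_t$-leaves of $\mathcal{F}_t^{u(k_0+j)}$ onto $W^u_{\mathrm{loc}}(\hat p_t)$, followed by the Implicit Function Theorem along $\sigma$---is the natural one, and you are right that the parameter-uniform Inclination Lemma is the heart of the matter. But one step is not yet justified. The sentence ``the uniform $C^1$-closeness of the leaf to $W^u_{\mathrm{loc}}(\hat p_t)$ translates into $C^1$-closeness of $\tilde c_j$ to $c$'' conflates two different $C^1$ topologies. What the standard $\lambda$-lemma yields, even uniformly over a compact $t$-interval, is that for each \emph{fixed} $t$ the leaf is $C^1$-close in the spatial variable to $W^u_{\mathrm{loc}}(\hat p_t)$. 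That controls $|\tilde c_j(t)-c(t)|$ but not $|d\tilde c_j/dt-dc/dt|$: a one-parameter family of curves can sit $C^1$-close to a target curve for every $t$ and still oscillate rapidly in $t$. To get (\ref{accomp}) you need $C^1$-convergence jointly in $(t,\text{arc-length})$, i.e.\ control of the $t$-derivative of the leaf as well. One clean way to obtain this is to pass to the skew product $\Psi(t,z)=(t,\psi_t(z))$ on $J\times\mathbb{R}^2$, observe that the graph $\{(t,\hat p_t)\}$ is a normally hyperbolic invariant curve for $\Psi$, and apply the $\lambda$-lemma there; the resulting $C^1$-convergence is then in the joint variable and gives what you need. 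This is presumably what the argument in \cite[Appendix A]{KS1} amounts to, and it is exactly the content you flag as ``the main obstacle'' without resolving.

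A smaller point: the claim ``by continuity the given $C^1$ function $v$ must coincide locally with some such $\tilde c_j$'' is correct but deserves a line. Pulling $\alpha(v(t))$ back through the continuation homeomorphisms of the basic set (and the associated holonomies) produces a continuous map from an interval into the \emph{fixed} totally disconnected set of $\Lambda_{t_0}$-leaf intersections with $\sigma$; such a map is constant, so $v$ indeed tracks a single leaf-continuation. Without this observation one cannot rule out that $v$ jumps between leaves, in which case the derivative estimate would not follow from the single-leaf analysis.
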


\section{Continuations of homoclinic tangencies}\label{S_2}
As we have stated in the end of Introduction, arguments in \cite[Section 6.3]{PT} which were 
used to detect diffeomorphisms admitting homoclinic or heteroclinic tangencies in a small neighborhood of 
H\'enon maps in $\mathrm{Diff}^\infty (\mathbb{R}^2)$ can not be applied directly to the 
\emph{fixed} two-parameter family  of original H\'enon maps:
$$
\varphi_{a,b}(x,y)=(y, a-bx+y^2).
$$
In this section, we will  consider a $C^\infty$ function $h:(-\varepsilon,\varepsilon)\to \mathbb{R}$ such that the $b$-parameter family $\{\varphi_{h(b),b}\}$ admits a $b$-continuation of homoclinic quadratic tangencies $q_{h(b),b}$ each of which unfolds generically with respect to the $a$-parameter family $\{\varphi_{a,b(\mathrm{fixed})}\}$. 
Finally, we will obtain essential results for the $a$-parameter family. 

For any element $(a,b)$ of a small neighborhood of $(-2,0)$ in the parameter space, $\varphi_{a,b}$ has the two fixed points $p_{a,b}^\pm$ with
\begin{equation}\label{p_ab}
p_{a,b}^\pm=(y_{a,b}^\pm,y_{a,b}^\pm),\quad\mbox{where}\quad
y_{a,b}^\pm=\frac{1+b\pm\sqrt{(1+b)^2-4a}}{2}.
\end{equation}
For short, we set $p_{a,b}^+=p_{a,b}$ and $y_{a,b}^+=y_{a,b}$.
Then, the eigenvalues of the differential $(D\varphi_{a,b})_{p_{a,b}}$ at $p_{a,b}$ are
\begin{equation}\label{e_v}
\lambda_{a,b}={y_{a,b}-\sqrt{y_{a,b}^2-b}},\ \sigma_{a,b}={y_{a,b}+\sqrt{y_{a,b}^2-b}}.
\end{equation}
Thus, for any $(a,b)\approx (-2,0)$ with $b\neq 0$, the eigenvalues satisfy
\begin{equation}\label{lambda_sigma}
0<\vert\lambda_{a,b}\vert <1<\sigma_{a,b}\quad\mbox{and}\quad \vert \lambda_{a,b}\vert \sigma_{a,b}<1.
\end{equation}
A fixed point satisfying the condition (\ref{lambda_sigma}) is called a {\it dissipative saddle} fixed point.

When $b=0$, $\varphi_{a,0}$ is not a diffeomorphism.
Even in this case, one can define the stable and unstable manifolds associated to $p_{a,0}$ in a usual manner.
The stable manifold $W^s(p_{a,0})$ of $\varphi_{a,0}$ is the horizontal line $y=y_{a,0}$ in $\mathbb{R}^2$ passing through $p_{a,0}$.
Hence, $W^s(p_{a,0})$ contains the horizontal segment $S_{a,0}=\{(x,y_{a,0});\vert x\vert\leq 5/2\}$.
By the Stable Manifold Theorem (see e.g.\ Robinson \cite[Chapter 5, Theorem 10.1]{Rb}), for any $(a,b)\approx (-2,0)$,
there exists an almost horizontal segment $S_{a,b}\subset W^s(p_{a,b})$ containing $p_{a,b}$ which $C^\infty$ depends on $(a,b)$ and such that one of the end point of $S_{a,b}$ is in the vertical line $x=-5/2$ and the other in $x=5/2$.
In particular, each $S_{a,b}$ is represented as the graph of a $C^\infty$ function $\eta_{a,b}$ of $x$ $C^\infty$ depending on $(a,b)$, that is,
$$S_{a,b}=\{(x,\eta_{a,b}(x));\vert x\vert \leq 5/2\}.$$
Since the family $\{\eta_{a,b}\}$ $C^\infty$ converges to the constant function $\eta_{a_0,0}$ uniformly as $(a,b)\rightarrow (a_0,0)$,
\begin{equation}\label{to_zero}
\begin{split}
&\lim_{(a,b)\rightarrow (a_0,0)}\max\left\{\left\vert\frac{d\eta_{a,b}}{d x}(x)\right\vert\,;\,-5/2\leq x\leq 5/2\right\}=0,\\
&\lim_{(a,b)\rightarrow (a_0,0)}\max\left\{\left\vert\frac{d^2\eta_{a,b}}{d x^2}(x)\right\vert\,;\,-5/2\leq x\leq 5/2\right\}=0.
\end{split}
\end{equation}

From the definition, the unstable manifold $W^u(p_{a,0})$ consists of the points $q\in \mathbb{R}^2$ which admits a sequence $\{q_n\}_{n=0}^\infty$ in $\mathbb{R}^2$ with $q_0=q$, $q_n\in \varphi_{a,0}^{-1}(q_{n-1})$ for $n=1,2,\dots$ and $\lim_{n\rightarrow \infty}q_n=p_{a,0}$.
In particular, $W^u(p_{a,0})$ is contained in the parabolic curve $\mathrm{Im}(\varphi_{a,0})=\{(x,x^2+a); -\infty <x<\infty\}$.
Then, it is not hard to show that
$$W^u(p_{a,0})=\{(x,x^2+a); a \leq x<\infty\}$$
for any $a\approx -2$.
Again by the Stable Manifold Theorem, for any $(a,b)\approx (-2,0)$ (possibly $b=0$), there exist short curves $T_{a,b}$ in $W^u_{\mathrm{loc}}(p_{a,b})$ with $\mathrm{Int}(T_{a,b})\ni p_{a,b}$ and varying $C^\infty$ with respect to $(a,b)$.
Thus, for any integer $m>0$, $T^{(m)}_{a,b}=\varphi_{a,b}^m(T_{a,b})$ $C^\infty$ converges to $T^{(m)}_{a_0,0}=\varphi_{a_0,0}^m(T_{a_0,0})$ as $(a,b)\rightarrow (a_0,0)$.
Intuitively, the curve $T^{(m)}_{a_0,0}$ is obtained by `folding' $T^{(m)}_{a,b}$ when $m$ is large enough, see Fig.\ \ref{fg_2}.
%%%%%%%%%%%%%%%%%%%%%%%%%
\begin{figure}[hbt]
\begin{center}
\scalebox{0.85}{\includegraphics[clip]{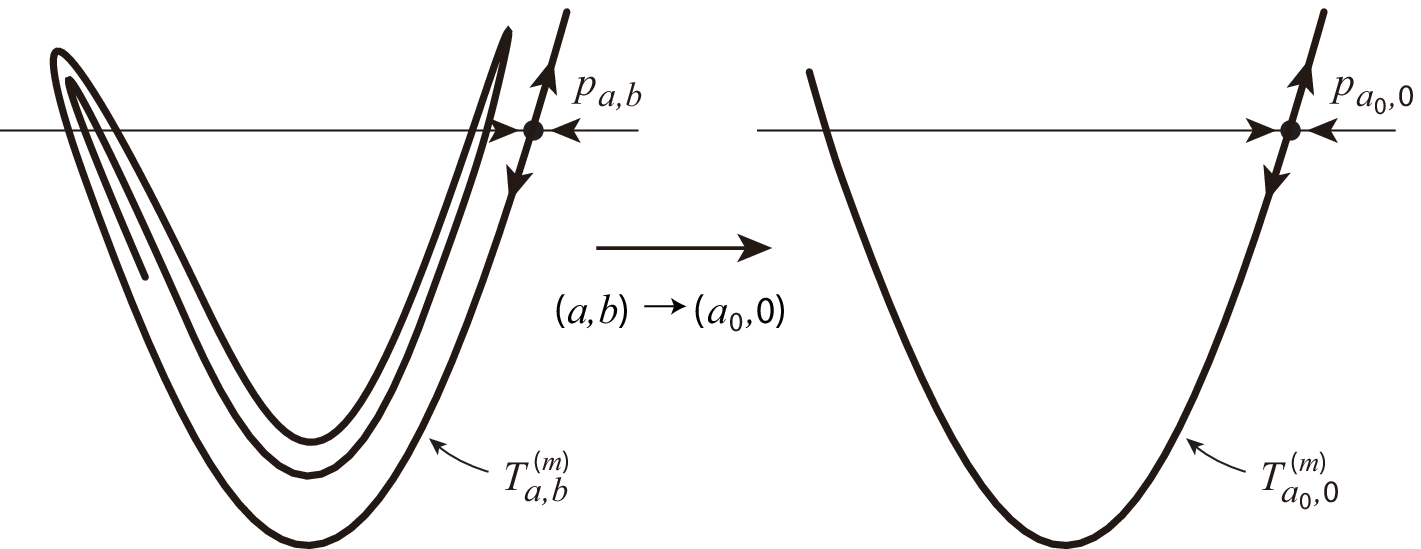}}
\caption{}
\label{fg_2}
\end{center}
\end{figure}
%%%%%%%%%%%%%%%%%%%%%%
Let $a_0$ be any number sufficiently close to $-2$.
For any $(a,b)\approx (a_0,0)$, take a curve $U_{a,b}$ in $W^u(p_{a,b})$ with $p_{a,b}$ as one of its end points  and $C^\infty$ converging to $U_{a_0,0}=\{(x,x^2+a_0);a_0\leq x\leq y_{a_0,0}\}\subset W^u(p_{a_0,0})$ injectively as $(a,b)\rightarrow (a_0,0)$.
Set
$$U^{1}_{a,b}=\varphi_{a,b}(U_{a,b})\setminus U_{a,b}, \quad 
U^{2}_{a,b}=\varphi_{a,b}(U^{1}_{a,b})\quad\mbox{and}\quad U^{3}_{a,b}=\varphi_{a,b}(U^{2}_{a,b}).
$$
Fix a sufficiently small $\delta >0$, let $l_{a,b}$ be the curve in $U_{a,b}^1$ such that its end points are contained in the vertical lines 
$x=\pm \delta$.
The curve $l_{a,b}$ is represented by the graph of a $C^\infty$ function $y=\zeta_{a,b}(t)$ $(-\delta\leq t\leq \delta)$ $C^\infty$ depending on $(a,b)$ and such that $\{\zeta_{a,b}\}$ uniformly $C^\infty$ converges to the function $\zeta_{a,0}$ with $\zeta_{a,0}(t)=t^2+a$ as $b\rightarrow 0$.
Note that $l_{a,b}^1=\varphi_{a,b}(l_{a,b})$ has a maximal point of $U^2_{a,b}$ contained in a small neighborhood $V(-2,2)$ of $(-2,2)$ in $\mathbb{R}^2$, see Fig.\ \ref{fg_3}.
%%%%%%%%%%%%%%%%%%%%%%%%
\begin{figure}[hbt]
\begin{center}
\scalebox{0.85}{\includegraphics[clip]{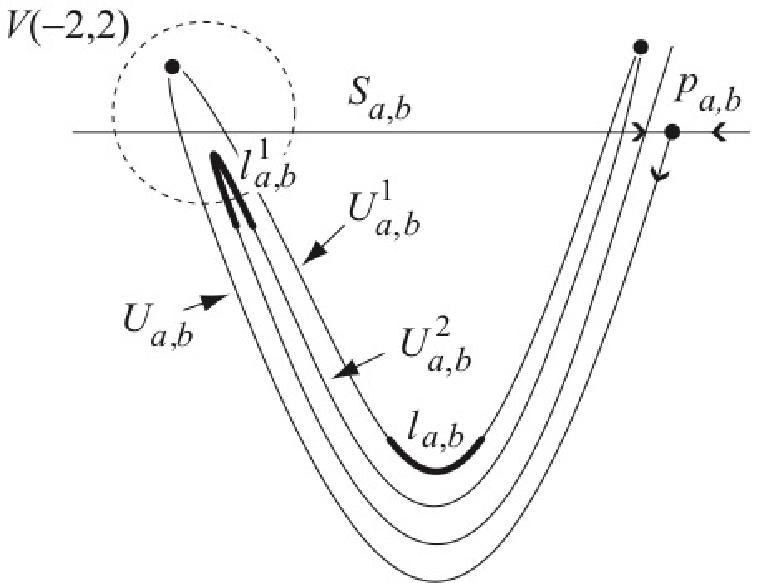}}
\caption{The case of $b>0$.}
\label{fg_3}
\end{center}
\end{figure}
%%%%%%%%%%%%%%%%%%%%%

Consider $C^\infty$ diffeomorphisms $\Psi_{a,b}$ on $\mathbb{R}^2$ with $\Psi_{a,b}(x,y)=(x,y-\eta_{a,b}(x))$ if $\vert x\vert \leq 5/2$.
Then, for any $(a,b)\approx (-2,0)$, the image $\Psi_{a,b}(S_{a,b})$ is equal to the horizontal segment $[-5/2,5/2]\times \{0\}$ in $\mathbb{R}^2$.
In particular, any quadratic tangency of $\Psi_{a,b}(S_{a,b})$ and a curve $l$ in $\mathbb{R}^2$ is either a maximal or minimal point of $l$.
Let $\theta_{a,b}:[-\delta,\delta]\to \mathbb{R}$ be the $C^\infty$ function such that $\theta_{a,b}(t)$ is the $y$-entry of the coordinate of $\Psi_{a,b}\circ \varphi_{a,b}(t,\zeta_{a,b}(t))\in \mathbb{R}^2$, that is,
$$\theta_{a,b}(t)=a-bt+\zeta_{a,b}(t)^2-\eta_{a,b}(\zeta_{a,b}(t)).$$
Note that $\theta_{a,b}$ $C^\infty$ depends on $(a,b)$.
By (\ref{to_zero}), both $\vert d\eta_{a,b}(x)/dx\vert$, $\vert d^2\eta_{a,b}(x)/dx^2\vert$ $(-5/2\leq  x \leq 5/2)$ are sufficiently small.
Since moreover $d\zeta_{a,b}(t)/dt\rightarrow 2t$ and $d^2\zeta_{a,b}/dt^2(t)\rightarrow 2$ as $b\rightarrow 0$,
$d^2\theta_{a,b}(t)/dt^2\approx -8\neq 0$ for any $(a,b)\approx (-2,0)$ and $t\approx 0$.
Hence, there exists a unique point $t_{a,b}\in (-\delta,\delta)$ at which $\theta_{a,b}$ has the maximal value $\theta_{a,b}(t_{a,b})$.
Here, we set
$$H(a,b):=\theta_{a,b}(t_{a,b})\quad\mbox{and}\quad q_{a,b}:=\varphi_{a,b}(t_{a,b},\zeta_{a,b}(t_{a,b}))\in l_{a,b}^1.$$
The point $q_{a,b}$ is a candidate of the quadratic tangency of $S_{a,b}$ and $W^u(p_{a,b})$ for suitable pairs of $(a,b)$.

From the definition, we know that $H$ is a $C^\infty$ function of $(a,b)\approx (-2,0)$.
Since $S_{a,0}$ is in the horizontal line $y=y_{a,0}$,
$$H(a,0)=a^2+a-y_{a,0}=a^2+a-\frac{1+\sqrt{1-4a}}{2}.$$
Thus, we have
$$
H(-2,0)=0,\quad \frac{\partial H}{\partial a} (a,b) \approx -\frac{8}{3}\neq 0
$$
for any $(a,b)\approx (-2,0)$.
By the Implicit Function Theorem, there exists a $C^\infty$ function $h:I_\varepsilon=(-\varepsilon, \varepsilon)\to \mathbb{R}$ for a small $\varepsilon>0$ satisfying
\begin{equation}\label{implicit function}
h(0)=-2,\quad H(h(b),b)=0,\quad  \frac{dh}{d b} =
-\frac{{\partial H}/{\partial b}}{{\partial H}/{\partial a}}.
\end{equation}
For each $b\in I_\varepsilon$, since
$$\theta_{h(b),b}(t_{h(b),b})=0,\quad \frac{d\theta_{h(b),b}}{dt}(t_{h(b),b})=0\quad\mbox{and}\quad \frac{d^2\theta_{h(b),b}}
{dt^2}(t_{h(b),b})\approx -8\neq 0,$$ 
$q_{h(b),b}$ is a quadratic tangency of $W^u(p_{h(b),b})$ and $S_{h(b),b}\subset W^s(p_{h(b),b})$.

\begin{remark}\label{r_1}
We note that $S_{h(b),b}$ is almost horizontal, but not in general strictly horizontal when $b\neq 0$.
Thus, the slope of $S_{h(b),b}$ at the tangency $q_{h(b),b}$ is not necessarily zero, and hence $q_{h(b),b}$ may not be a maximal point of $l_{h(b),b}^1$.
On the other hand, $\Psi_{h(b),b}(q_{h(b),b})$ is a unique maximal point of $\Psi_{h(b),b}(l_{h(b),b}^1)$ tangent to 
the horizontal segment $\Psi_{h(b),b}(S_{h(b),b})$.
This is a technical reason for introducing the coordinate change by $\Psi_{a,b}$ to define the function $H$.
\end{remark}

With the notation as above, we will prove the following proposition.

\begin{proposition}\label{tangency}
If $\varepsilon>0$ is sufficiently small, then for any $b\in  I_{\varepsilon}$, $W^u(p_{h(b),b})$ and $W^s(p_{h(b),b})$ have a transverse intersection and the quadratic tangency $q_{h(b),b}$ unfolds generically with respect to the $a$-parameter family $\{\varphi_{a,b(\mathrm{fixed})}\}$.
\end{proposition}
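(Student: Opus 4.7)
My plan is to prove Proposition \ref{tangency} by direct analysis of the data already on the table: the function $H(a,b)=\theta_{a,b}(t_{a,b})$ and the parametrization $(\zeta_{a,b}(t),\theta_{a,b}(t))$ of $\Psi_{a,b}(l^1_{a,b})$. First I would verify that $q_{h(b),b}$ is a genuine quadratic tangency. By construction $\theta_{h(b),b}(t_{h(b),b})=H(h(b),b)=0$, so in $\Psi_{h(b),b}$-coordinates the image curve attains its unique critical value $0$ at $t_{h(b),b}$, lying on the horizontal segment $\Psi(S_{h(b),b})$. Differentiating $\theta_{a,b}$ in $t$ and using $\zeta\approx-2$, $\eta'\approx 0$, the condition $d\theta/dt(t_{h(b),b})=0$ forces $\zeta'_{h(b),b}(t_{h(b),b})\approx -b/4$, so for $b\neq 0$ the tangent $(\zeta',0)$ is a nonzero horizontal vector, matching that of $\Psi(S_{h(b),b})$. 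Combined with $d^2\theta/dt^2\approx -8\neq 0$, this identifies $q_{h(b),b}$ as a quadratic tangency of $W^u(p_{h(b),b})$ with $W^s(p_{h(b),b})$.

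\smallskip

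For the generic-unfolding assertion I fix $b$ and vary $a$. In the $\Psi_{a,b}$-coordinates the signed height of the tangency above the stable segment is exactly $H(a,b)$; after a standard $C^\infty$ recentering at the moving tangency point, $l^1_{a,b}$ takes the normal form $y=c(a,b)x^2+H(a,b)$ with $c(a,b)\neq 0$, so the role of the displacement function called $b(t)$ in Definition \ref{d_unfold} is played by $a\mapsto H(a,b)$ with $b$ fixed. It therefore suffices to show $\frac{\partial H}{\partial a}(h(b),b)\neq 0$. From the explicit formula $H(a,0)=a^2+a-\frac{1}{2}(1+\sqrt{1-4a})$ I compute
\[
\frac{\partial H}{\partial a}(-2,0)=2(-2)+1+\frac{1}{\sqrt{9}}=-\frac{8}{3}\neq 0,
\]
and $C^\infty$-continuity of $H$ preserves this inequality on a neighborhood of $(-2,0)$; shrinking $\varepsilon$ then makes it hold along $\{(h(b),b):b\in I_\varepsilon\}$.

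\smallskip

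For the transverse-homoclinic assertion I would track the next iterate $l^2_{h(b),b}:=\varphi_{h(b),b}(l^1_{h(b),b})$. A chain-rule computation at $\varphi_{h(b),b}(q_{h(b),b})$ gives tangent direction $D\varphi\cdot(-b/4,0)=(0,b^2/4)$, nearly vertical, while $S_{h(b),b}$ is nearly horizontal, so any intersection of $l^2_{h(b),b}$ with $S_{h(b),b}$ is automatically transverse. At the endpoints $t=\pm\delta$ the signed vertical distance from $l^2_{h(b),b}(t)$ to $S_{h(b),b}$ is approximately $-16\delta^2$, while near $t=t_{h(b),b}$ it is only of order $b$; choosing $\varepsilon$ small relative to the fixed $\delta$, an Intermediate Value Theorem argument along $l^2_{h(b),b}$ produces the required transverse crossing. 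The main obstacle is this last step: since iterates of a quadratic tangency are again tangencies, the transverse crossing cannot be $\varphi_{h(b),b}(q_{h(b),b})$ itself and must be located elsewhere on $l^2_{h(b),b}$; making the IVT argument uniform in $b$ may require pushing to a further iterate or exploiting the Chebyshev-fold structure of $\varphi_{a,b}$ near $(-2,0)$ to secure a transverse crossing for both signs of $b$.
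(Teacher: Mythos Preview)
Your treatment of the quadratic tangency and of generic unfolding is correct and is exactly what the paper does: the displacement function is $a\mapsto H(a,b)$, and $\partial H/\partial a(h(b),b)\approx -8/3\neq 0$ already appears in the text preceding the proposition, so this half needs no new work.

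The gap is in the transverse-intersection half, and it is the one you flag yourself. Your IVT scheme on $l^2_{h(b),b}$ compares a boundary value $\approx -16\delta^2$ with a central value that is only $O(b)$ of indeterminate sign; without knowing that sign you cannot conclude a crossing, and iterating further only propagates the same ambiguity. The paper sidesteps this entirely by a short case split in $b$ rather than by pushing iterates forward. For $b>0$ it does not look at $l^2$ at all: since $l^1_{h(b),b}\subset U^2_{h(b),b}$ touches $S_{h(b),b}$ from below, the earlier arc $U_{h(b),b}\cup U^1_{h(b),b}$ must already have crossed $S_{h(b),b}\cap V(-2,2)$ transversely in two points (this is the content of Figure~\ref{fg_3}). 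For $b<0$ that configuration fails, and the paper instead observes that $U^3_{h(b),b}$ meets $S_{h(b),b}\cap V(-2,2)$ transversely. So the missing idea is simply to drop the attempt at a sign-uniform IVT and read the transverse crossings directly off the fold geometry of $U,U^1,U^2,U^3$, handling the two signs of $b$ separately.
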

\begin{proof}
Suppose first that $b$ is any positive number sufficiently close to $0$.
Then, since $l_{h(b),b}^1$ is tangent to $S_{h(b),b}\cap V(-2,2)$ at $q_{h(b),b}$, $U_{h(b),b}\cup U_{h(b),b}^1$ meets $S_{h(b),b}\cap V(-2,2)$ transversely in two points, see Figure \ref{fg_3}.
On the other hand, for any $b<0$ sufficiently close to $0$, $U_{h(b),b}\cup U_{h(b),b}^1$ is disjoint from $S_{h(b),b}\cap V(-2,2)$.
But, in this case, $U^3_{h(b),b}$ meets $S_{h(b),b}\cap V(-2,2)$ transversely in two points.
This shows the former assertion.

From the fact that $\partial H(a,b)/\partial a\vert_{a=h(b)}\neq 0$, we know that the tangency $q_{h(b),b}$ unfolds generically with respect to the $a$-parameter family $\{\varphi_{a,b(\mathrm{fixed})}\}$.
This completes the proof.
\end{proof}

We denote the graph of $h$ in the $ab$-space by $\mathcal{H}$, that is, $\mathcal{H}=\{(h(b),b);\,b\in I_\varepsilon\}$.
For any $(\hat a,\hat b)\in \mathcal{H}$ with $\hat b\neq 0$, one can take a sufficiently thin rectangle $R\subset\mathbb{R}^2$ with $\mathrm{Int}R\supset S_{\hat a,\hat b}$ and an even integer $w>0$ so that 
$R\cap \varphi_{\hat a,\hat b}^w(R)$ has curvilinear rectangle components $R_0,R_1$ such that $\mathrm{Int}R_0$ contains the saddle fixed point $p_{\hat a,\hat b}$, and  $\mathrm{Int}R_1$ contains a homoclinic transverse point associated with $p_{\hat a,\hat b}$ in $V(-2,2)$, but $R_0\cup R_1$ is disjoint from the homoclinic tangency $q_{\hat a,\hat b}$ given in Proposition \ref{tangency}, see Figure \ref{fg_4}.
%%%%%%%%%%%%%%%%%%%%%%%%
\begin{figure}[hbt]
\begin{center}
\scalebox{1}{\includegraphics[clip]{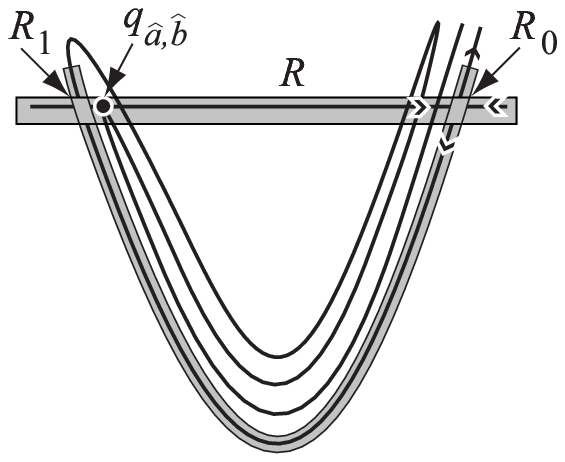}}
\caption{The case of $\hat b>0$.}
\label{fg_4}
\end{center}
\end{figure}
%%%%%%%%%%%%%%%%%%%%%
One can take such an $R$ so that the intersection
$${\Lambda_{\hat a,\hat b}^{\mathrm{out}}}:=\bigcap_{n=-\infty}^\infty \varphi_{\hat a,\hat b}^{wn}(R_0\cup R_1)$$ 
is a horseshoe basic set of $\varphi_{\hat a,\hat b}^w$.    
Note that, since $w$ is an even integer, $\varphi_{\hat a,\hat b}^w$ is an orientation-preserving diffeomorphism even when $\hat b<0$.
Here, the superscript ``out'' implicitly suggests that $\Lambda_{\hat a,\hat b}^{\mathrm{out}}$ is 
in the outside of a small neighborhood of the homoclinic tangency $q_{\hat a,\hat b}$.
Then, for any $(a,b)$ sufficiently close to $(\hat a,\hat b)$ (and hence in particular $b\neq 0$), 
there exists a continuation of basic sets $\Lambda_{a,b}^{\mathrm{out}}$ of $\varphi_{a,b}^w$ based at $\Lambda_{\hat a,\hat b}^{\mathrm{out}}$.

\begin{lemma}\label{outer_horseshoe}
For any $(\hat a,\hat b)\in \mathcal{H}$ with $\hat b\neq 0$, 
there exists an open neighborhood $\mathcal{O}=\mathcal{O}(\hat a,\hat b)$ of $(\hat a,\hat b)$ in the $ab$-space and a constant $c=c(\hat a,\hat b)>0$ such that, for any $(a,b)\in \mathcal{O}$, the thickness $\tau(\Lambda_{a,b}^{\mathrm{out}}\cap W^s(p_{a,b}))$ is greater than $c$.
\end{lemma}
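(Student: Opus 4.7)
The strategy is a two-step argument: first establish a positive thickness bound at the fixed parameter $(\hat a,\hat b)$ using the hyperbolicity of the horseshoe together with bounded distortion, then extend to a neighborhood by smooth dependence of all ingredients on $(a,b)$.

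At the base parameter, $K^s_{\hat a,\hat b}:=\Lambda^{\mathrm{out}}_{\hat a,\hat b}\cap W^s(p_{\hat a,\hat b})$ is a dynamically defined Cantor set sitting inside the stable arc $S_{\hat a,\hat b}\cap R$, generated from the Markov partition $\{R_0,R_1\}$ by iterating $\varphi^w_{\hat a,\hat b}$. The initial two bridges are the traces $R_0\cap S_{\hat a,\hat b}$ and $R_1\cap S_{\hat a,\hat b}$, the initial gap is their separation, and every subsequent bridge and gap is obtained by pulling back by an iterate of $\varphi^w_{\hat a,\hat b}$ restricted to $S_{\hat a,\hat b}$, an almost horizontal arc on which the differential contracts by a rate close to $|\lambda_{\hat a,\hat b}|<1$. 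The standard bounded distortion estimate for uniformly hyperbolic $C^2$ dynamics (see \cite[Section~4.2]{PT} and \cite{N1}) then yields an explicit lower bound $\tau_0=\tau_0(\hat a,\hat b)>0$ for the thickness of $K^s_{\hat a,\hat b}$, depending only on the finitely many initial geometric data together with the hyperbolicity and distortion constants of $\varphi^w_{\hat a,\hat b}$ on $\Lambda^{\mathrm{out}}_{\hat a,\hat b}$.

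For the second step, since $\hat b\neq 0$ the family $\{\varphi^w_{a,b}\}$ is $C^\infty$ on a whole open neighborhood $\mathcal{O}_0$ of $(\hat a,\hat b)$ in the $ab$-space; on $\mathcal{O}_0$ the horseshoe $\Lambda^{\mathrm{out}}_{a,b}$ has a continuation varying continuously through a topological conjugacy close to the identity, the local stable manifolds $W^s_{\mathrm{loc}}(p_{a,b})\cap R$ vary $C^\infty$ by the Stable Manifold Theorem, and the eigenvalues $\lambda_{a,b},\sigma_{a,b}$ are smooth. Hence the initial bridges and gaps, the contraction rate, and the distortion constants appearing in the thickness estimate all depend continuously on $(a,b)\in\mathcal{O}_0$. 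Shrinking $\mathcal{O}_0$ to a neighborhood $\mathcal{O}$ on which these quantities stay within half of their value at $(\hat a,\hat b)$ yields the uniform bound $\tau(K^s_{a,b})\geq c:=\tau_0/2$ claimed in the lemma.

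The main obstacle is the technical verification that thickness actually depends continuously on the parameter: \emph{a priori} it is only lower-semicontinuous, because thickness is an infimum over the infinitely many boundary points of all gaps. The mechanism for getting around this is the self-similar structure of the Cantor set coming from the two-piece Markov partition -- every gap and bridge is an iterated preimage of the initial finitely many ones -- so bounded distortion forces the bridge/gap ratios at deep generations to stay uniformly close to the initial finite-level ratios, reducing the continuous dependence to a finite-dimensional continuity check plus a uniform distortion estimate, both of which are standard in the horseshoe setting.
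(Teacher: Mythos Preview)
Your argument is correct and follows essentially the same two-step approach as the paper: positivity of the thickness at $(\hat a,\hat b)$ because the Cantor set is dynamically defined, followed by continuity of the thickness in the parameter. The paper simply invokes \cite[p.~80, Proposition~7]{PT} for the first step and \cite[p.~85, Theorem~2]{PT} for the second, whereas you unpack the bounded-distortion mechanism behind both citations; your explicit remark that the infimum defining thickness is \emph{a priori} only lower-semicontinuous, and that the Markov self-similarity resolves this, is exactly the content of the cited continuity theorem.
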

\begin{proof}
Since $\Lambda_{\hat a,\hat b}^{\mathrm{out}}\cap W^s(p_{\hat a,\hat b})$ is a dynamically defined Cantor set, $\tau(\Lambda_{\hat a,\hat b}^{\mathrm{out}}\cap W^s(p_{\hat a,\hat b}))$ is positive, see \cite[p.\ 80, Proposition 7]{PT}.
Since moreover $\tau(\Lambda_{a,b}^{\mathrm{out}}\cap W^s(p_{a,b}))$ is continuous on $(a,b)$, see \cite[p.\ 85, Theorem 2]{PT}, one can have $c>0$ satisfying our desired property if the neighborhood $\mathcal{O}$ is taken sufficiently small.
\end{proof}

\section{Double renormalization near quadratic tangencies}\label{S_4}

In this section, we will work with the notations  as in Section \ref{S_2} and 
reform renormalizations near a homoclinic tangency suitable to the proof of our main theorem.

Take an element $b\in I\setminus  \{0\}$ arbitrarily and fix throughout this section.
For simplicity, we set
\begin{equation}\label{varphi}
\varphi_a=\varphi^w_{a,b}\quad\mbox{and}\quad p_a=p_{a,b},
\end{equation}
where $w> 0$ is the even integer given in the paragraph preceding Lemma \ref{outer_horseshoe}.

\begin{lemma}\label{thick horseshoe}
There exists a closed interval $J_{b,1}$  arbitrarily close $h(b)$ 
satisfying the following conditions.
\begin{enumerate}[\rm (i)]
\item
There exist positive integers $n_0, m$ such that, for any $a\in J_{b,1}$  and any $n\geq n_0$, 
$\varphi_a$ has a basic set $\Lambda_{a,n}^m$ containing a  saddle periodic point $Q^m_{a,n}$,  
and besides, 
the thickness  of 
$\Lambda^m_{a,n}\cap W^u(Q^m_{a,n})$ is greater than  $2/c$, where $c=c(h(b),b)$ is the constant given in Lemma \ref{outer_horseshoe}.
\item
For any $a\in J_{b,1}$, the thickness of $\Lambda_{a,b}^{\mathrm{out}}\cap W^s(p_{a,b})$ is greater than $c$.
\end{enumerate}
\end{lemma}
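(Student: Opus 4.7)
The plan is to combine Proposition \ref{tangency} with the renormalization construction of Palis--Takens near a generically unfolding quadratic homoclinic tangency of a dissipative saddle. Condition (ii) is essentially immediate from Lemma \ref{outer_horseshoe}: the thickness $\tau(\Lambda_{a,b}^{\mathrm{out}} \cap W^s(p_{a,b}))$ exceeds $c=c(h(b),b)$ on the neighborhood $\mathcal{O}(h(b),b)$ provided there, so any closed subinterval $J_{b,1}$ of the slice $\{a : (a,b) \in \mathcal{O}\}$ containing $h(b)$ satisfies (ii). The real work is (i), together with the compatibility of the $J_{b,1}$ chosen for the two parts.

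For (i), I would use Proposition \ref{tangency} to obtain the quadratic homoclinic tangency $q_{h(b),b}$ of $W^u(p_{h(b),b})$ and $W^s(p_{h(b),b})$, generically unfolding in the $a$-parameter. Since $w$ is even, this remains a generically unfolding quadratic homoclinic tangency for $\varphi_{h(b)} = \varphi_{h(b),b}^w$ at the dissipative fixed point $p_{h(b)}$ (dissipativity from (\ref{lambda_sigma})). I would then apply the Palis--Takens renormalization \cite[Ch.~3]{PT}: for each sufficiently large integer $m$ there is an affine coordinate change $\Phi_m$ on a small neighborhood of $q_{h(b),b}$ and an affine reparametrization $a \mapsto \tilde a = A_m(a)$ such that $\Phi_m \circ \varphi_a^{m} \circ \Phi_m^{-1}$, restricted to a fixed compact box in the rescaled coordinates, $C^2$-converges as $m \to \infty$, uniformly in $\tilde a$ on compact intervals, to the quadratic limit family $(\tilde x,\tilde y)\mapsto(\tilde y,\tilde y^2+\tilde a)$.

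The limit family admits horseshoes for every $\tilde a < -2$, and the unstable thickness of these horseshoes tends to infinity as $\tilde a \uparrow -2$. Fixing a compact interval $[\tilde a_1, \tilde a_2]$ on which this thickness exceeds $3/c$, the $C^2$-convergence together with continuity of Cantor-set thickness under $C^2$-perturbations (\cite[Ch.~4]{PT}) yields, for all sufficiently large $m$, a continuation of basic sets of $\Phi_m \circ \varphi_a^{m} \circ \Phi_m^{-1}$ over $\tilde a \in [\tilde a_1, \tilde a_2]$ with unstable thickness $> 2/c$. Since $\Phi_m$ is affine and preserves thickness, pulling back produces basic sets of $\varphi_a$ on the closed interval $J_{b,1} := A_m^{-1}([\tilde a_1, \tilde a_2])$. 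The rescaling $A_m^{-1}$ contracts as $m$ grows, so $J_{b,1}$ can be forced arbitrarily close to $h(b)$. The label $n \geq n_0$ would come from a secondary (internal) renormalization, indexing nested sub-horseshoes inside the first-level structure — consistent with the ``double renormalization'' announced as the mechanism of Section \ref{S_4} — each inheriting the uniform thickness bound and containing a saddle periodic point $Q^m_{a,n}$. Shrinking $J_{b,1}$ so that it also lies in the slice from (ii) completes the construction.

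The principal obstacle I anticipate is uniformity in $a$ over a \emph{closed} subinterval, inside the fixed two-parameter family $\{\varphi_{a,b}\}$ rather than in a free $C^\infty$ neighborhood as in the textbook setting. This is resolved by the fact that the quadratic limit family has an \emph{open} $\tilde a$-interval of horseshoes whose thickness exceeds any prescribed bound, together with uniform $C^2$-convergence of the renormalized maps on compact subsets in $(\tilde x, \tilde y, \tilde a)$; the uniform thickness estimate for the limit then survives the $C^2$-small perturbation to the actual renormalized family.
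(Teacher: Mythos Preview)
Your proposal has the roles of $m$ and $n$ reversed and, as a consequence, does not construct the family $\{\Lambda_{a,n}^m\}_{n\geq n_0}$ the statement asks for. In the paper's proof, $n$ is the renormalization index: one builds reparametrizations $\Theta_n$ and $a$-dependent coordinate changes $\Phi_n$ so that $\psi_{\bar a,n}:=\Phi_n^{-1}\circ\varphi_{\Theta_n(\bar a)}^{N+n}\circ\Phi_n$ $C^2$ converges to the quadratic endomorphism $(\bar x,\bar y)\mapsto(\bar y,\bar y^2+\bar a)$. The integer $m\geq 3$ is then a \emph{fixed} period: the Palis--Takens proposition \cite[p.~124]{PT} furnishes, for every $\bar a$ in a closed interval $\bar J$ with $-2\in\mathrm{Int}(\bar J)$ and every $n\geq n_0$, a basic set $\Lambda^m_{\bar a,n}$ of $\psi_{\bar a,n}$ containing a saddle $m$-periodic point $Q^m_{\bar a,n}$, whose unstable thickness tends to $\infty$ as $m\to\infty$. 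One fixes $m$ once so that the bound $2/c$ holds, and sets $J_{b,1}=\Theta_n(\bar J)$. Your $m$ plays the role the paper assigns to $n$, and your account of $n$ as a ``secondary (internal) renormalization'' is not what happens here; the ``double renormalization'' announced in the introduction refers to a \emph{second, independent} renormalization near the new tangencies $q_{\hat a}$ in Section~\ref{S_5}, not to a nested construction inside this lemma. Relatedly, your parameter window $[\tilde a_1,\tilde a_2]$ lies strictly below $-2$, whereas the paper needs $-2\in\mathrm{Int}(\bar J)$ so that the subsequent Lemma~\ref{velocity} and Theorem~\ref{persistent tangency} can work on $\bar J_{\bar\delta}=[-2-\bar\delta,-2+\bar\delta]\subset\bar J$; the $m$-horseshoes of \cite[p.~124]{PT} are chosen precisely because they persist across $\bar a=-2$, unlike the full horseshoe you invoke.

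There is also a technical point in citing \cite[Ch.~3]{PT} directly: that renormalization uses Sternberg-type linearizing coordinates at the saddle, which require nonresonance conditions on $(\lambda_{a,b},\sigma_{a,b})$ that need not hold for every $b$ in the \emph{fixed} H\'enon family. The paper circumvents this by invoking Romero's version \cite[Theorem D]{R94} (built on \cite{AS}), which dispenses with the linearization hypothesis. The resulting $\Phi_n$ is then not affine, so your appeal to ``$\Phi_m$ affine and preserves thickness'' is unavailable---though this particular step is harmless, since thickness is preserved under $C^1$ reparametrizations of curves regardless. Part~(ii) in your proposal is correct and matches the paper: one simply takes $n_0$ large enough that $\Theta_n(\bar J)\times\{b\}\subset\mathcal{O}(h(b),b)$.
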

\begin{proof}
(i)
By Proposition \ref{tangency}, $\varphi_{h(b)}$ has the homoclinic quadratic tangency $q_{h(b),b}$ in $V(-2,2)$ associated with the dissipative saddle fixed point $p_{h(b)}$ which unfolds generically with respect to the one-parameter family 
$\{\varphi_a\}_{a\in J_{b,0}}$.
We may suppose 
\begin{itemize}
\item
$p_a=(0,0)$.
\item
When $a=h(b)$, both the points $r=(1,0)$ and $ r^\prime=(0,1)\in U$ belong to the orbit of the homoclinic tangency  $q_{h(b),b}$ and satisfy $\varphi_{h(b)}^N(r^\prime)=r$ for some integer $N>0$.
\end{itemize}

By the work of Romero \cite[Theorem D]{R94} based on \cite{AS}  (see also in  \cite{GS90, MR97}), 
without the hypothesis of smooth linearization as in \cite{PT}, 
we get the following renormalization near the homoclinic tangency  $q_{h(b),b}$: 
 for any sufficiently large integer $n>0$, 
one can obtain a $C^{\infty}$ reparametrization $\Theta_{n}$ on $\mathbb{R}$ and an $a$-dependent $C^2$ coordinate change $\Phi_{n}$ on $\mathbb{R}^2$ satisfying the followings
\begin{itemize}
\item
$d\Theta_n(\bar a)/d\bar a>0$.
\item
$\Theta_n(\bar a)$ (resp.\ $\Phi_n(\bar x,\bar y)$) converges as $n\rightarrow \infty$ locally uniformly to the map with constant value $h(b)\in \mathbb{R}$ (resp.\ $r\in \mathbb{R}^2$).
\item
For any $\bar a\in \mathbb{R}$, the diffeomorphisms $\psi_{\bar a,n}$ on $\mathbb{R}^2$, defined by
$$(\bar{x},\bar{y})\mapsto \psi_{\bar{a}, n}( \bar{x},   \bar{y}):=
  \Phi_{n}^{-1}\circ\varphi_{\Theta_{n}(\bar{a})}^{N+n}\circ \Phi_{n}(\bar{x},\bar{y}),$$
$C^2$ converge as $n\rightarrow \infty$ locally uniformly to the endomorphism $\psi_{\bar a}$ with
$$\psi_{\bar{a}}( \bar{x},   \bar{y})=( \bar{y},   \bar{y}^{2}+\bar{a} ).$$
\end{itemize}

Furthermore, by \cite[p.\ 124, Proposition]{PT}, for each integer $m\geq 3$, we have a small closed interval $\bar{J}$ with $\mathrm{Int} \bar J\ni -2$ and an integer $n_0>0$ satisfying following conditions.
\begin{itemize}
\item For any $\bar{a}\in \bar{J}$ and any integer $n\geq n_0$, 
$\psi_{\bar{a}, n}$ has a saddle fixed point  $P_{\bar{a},n}$ in a small neighborhood of $(2,2)$ 
and a basic set $\Lambda^m_{\bar{a},n}$ containing a saddle  $m$-periodic point $Q^m_{\bar{a},n}$.
\item
$\Theta_n(\bar J)\subset J_{b,0}$.
\item
For any $\bar{a}\in \bar{J}$ and any integer $n\geq n_0$, the thickness $\tau(\Lambda^m_{\bar{a},n}\cap W^u(Q^m_{\bar{a},n}))$ of the Cantor set $\Lambda^m_{\bar{a},n}\cap W^u(Q^m_{\bar{a},n})$ is greater than an arbitrarily given constant if $m$ is sufficiently large.
\end{itemize}
So, one can take such $\bar{J}$, $n_0$ and $m$  so that 
$$
\tau(\Lambda^m_{\bar{a},n}\cap W^u(Q^m_{\bar{a},n}))\geq \frac{2}{c}
$$
 for any $\bar{a}\in \bar{J}$
 and  $n\geq n_0$. 
Then, the proof of (i) is completed by letting $J_{b,1}=\Theta_n(\bar{J})$, $\Lambda^m_{a,n}=\Phi_n(\Lambda^m_{\bar a,n})$ and $Q^m_{a,n}=\Phi_n(Q^m_{\bar a,n})$.

(ii)
For the proof, it suffices to retake the integer $n_0$ in (i) so that $J_{b,1}=\Theta_n(\bar{J})$ satisfies $J_{b,1}\times \{b\}\in \mathcal{O}(h(b),b)$ for any $n\geq n_0$, where $\mathcal{O}(h(b),b)$ is the open subset of the $ab$-space given in Lemma \ref{outer_horseshoe}.
\end{proof}

\begin{convent}\label{conv_1}
From now on, we will suppose that $x,y,a$ and $\bar x,\bar y,\bar a$ are related by $a=\Theta_n(\bar a)$ and $(x,y)=\Phi_n(\bar x,\bar y)$ whenever the $n$ is selected.
Moreover, for any parametrized subset $Y_{\bar{a}}$ of the $\bar{x}\bar{y}$-space (resp.\ $Z_a$ of the $xy$-space), we will denote the image $\Phi_n(Y_{\bar{a}})$ (resp.\  $\Phi_n^{-1}(Z_a)$) in the $xy$-space (resp.\ the $\bar x\bar y$-space) again by  $Y_{\bar{a}}$ (resp.\ $Z_a$).
\end{convent}

For any sufficiently large integer $n>0$, one can define a foliation $\mathcal{F}^s_{\bar{a},n}$ in the $\bar{x}\bar{y}$-space compatible with $W^s_{\mathrm{loc}}(\Lambda^m_{\bar{a},n})$ such that there exists an arc $l^u_{\bar{a},n}$ in $W^u(P_{\bar{a},n})$ crossing $\mathcal{F}^s_{\bar{a},n}$ exactly as shown in Figure \ref{fg_6}.
Let $S_{\bar{a},n}$ be the curve in $W^s(P_{\bar{a},n})$ containing $P_{\bar a,n}$ and such that one of the end point of $S_{a,b}$ is in the vertical line $\bar x=-5/2$ and the other in $\bar x=5/2$. 
For any sufficiently large integer $i>0$, by the Inclination Lemma (see for example \cite[Chapter 5, Theorem 11.1]{Rb}), 
one can have foliations $\mathcal{F}^{s(i)}_{\bar{a},n}$ obtained by shortening the leaves of $\psi^{-i}_{\bar{a},n}(\mathcal{F}^s_{\bar{a},n})$ so that all leaves of $\mathcal{F}^{s(i)}_{\bar{a},n}$ are well approximated by $S_{\bar{a},n}$, see in Figure \ref{fg_6}.
%%%%%%%%%%%%%%%%%%%%%%%%%
\begin{figure}[hbt]
\begin{center}
\scalebox{0.8}{\includegraphics[clip]{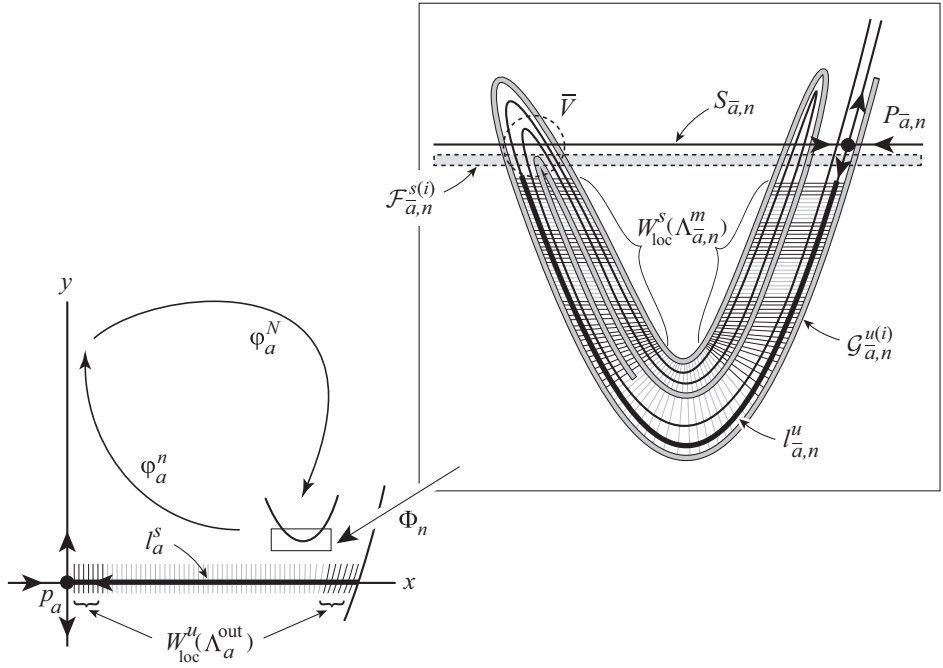}}
\caption{In the left (resp.\ right) side figure, the short segments meeting $l^s_a$ (resp.\ $l^u_{\bar a,n}$) transversely are leaves of $\mathcal{G}_a^u$ (resp.\ $\mathcal{F}^s_{\bar a,n}$).}
\label{fg_6}
\end{center}
\end{figure}
%%%%%%%%%%%%%%%%%%%%%%

Since $(a,b)\in \mathcal{O}$ for any $a\in J_{b,1}$, we have the basic set
 $\Lambda_{a}^{\mathrm{out}}:=\Lambda_{a,b}^{\mathrm{out}}$ given in Lemma \ref{outer_horseshoe}.
Let $\mathcal{G}^u_{a}$ be a foliation compatible with $W^u_{\mathrm{loc}}(\Lambda_{a}^{\mathrm{out}})$ and 
$l^s_{a}$ the segment in $W^s(p_a)$ crossing $\mathcal{G}^u_{a}$ exactly as shown in Figure \ref{fg_6}.
By \cite[p.\ 125, Proposition 1]{PT}, there exists a compact arc $\sigma^s_{\bar{a},n}$ in $W^s(P_{\bar{a},n})$ 
containing $P_{\bar{a},n}$ and converges in the $xy$-space to an arc in $W^s(p_{a})$ which contains at least one fundamental domain as $n\rightarrow \infty$.
Moreover, Remark 1 in \cite[p.\ 129]{PT} shows that, for all sufficiently large $n$ and some $j>0$, $\varphi^{-(n+j)}_a(\sigma^s_{\bar a,n})$ meets $\mathcal{G}_a^u$ non-trivially and transversely.
From this fact together with the Inclination Lemma, for any sufficiently large integer $i>0$,  
one can have a foliation $\mathcal{G}^{u(i)}_{\bar{a},n}$ in the $\bar x\bar y$-space as in Figure \ref{fg_6} obtained by shortening the leaves of 
the $\Phi_n^{-1}$-image of 
$\varphi^{i}_{a}(\mathcal{G}^u_{a})$ 
so that all leaves  of $\mathcal{G}^{u(i)}_{\bar{a},n}$ are well approximated by a single curve $L^{u}_{\bar a,n}$ in $W^u(P_{\bar{a},n})$ passing through $P_{\bar{a},n}$ and with three maximal points as illustrated in Fig.\ \ref{fg_7}.
%%%%%%%%%%%%%%%%%%%%%%%%%
\begin{figure}[hbt]
\begin{center}
\scalebox{0.8}{\includegraphics[clip]{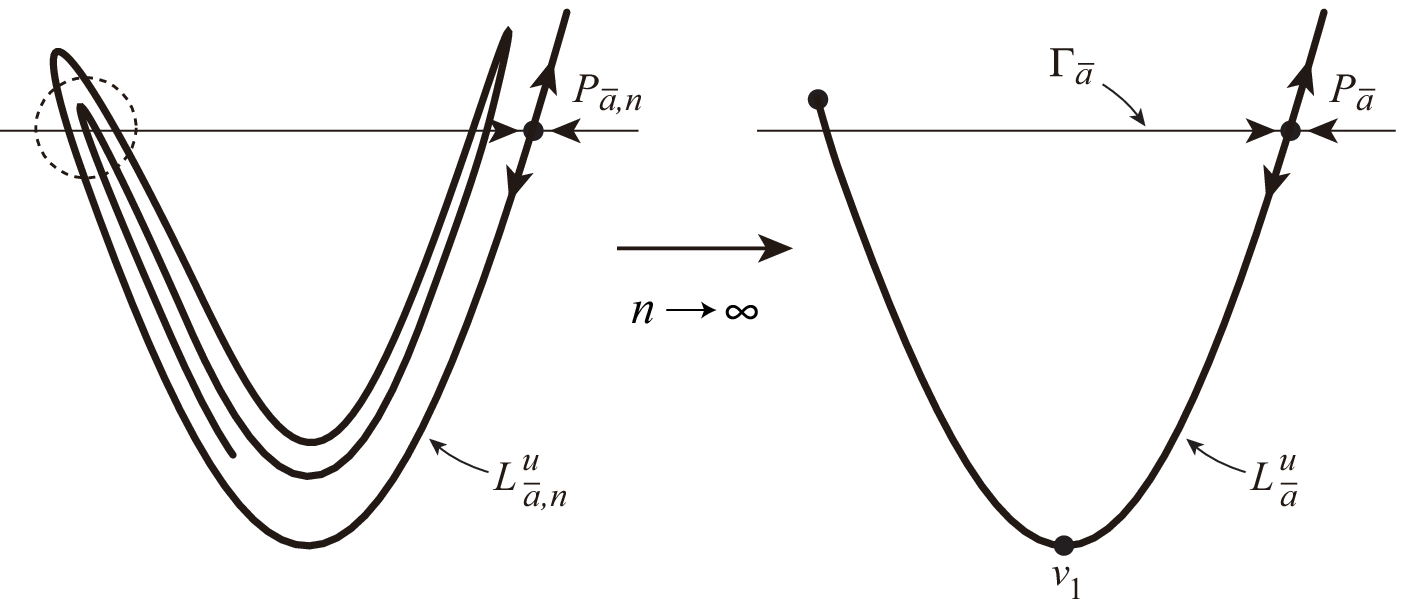}}
\caption{}
\label{fg_7}
\end{center}
\end{figure}
%%%%%%%%%%%%%%%%%%%%%%
Here, we take the sequence $\{L^u_{\bar a,n}\}_n$ so that it $C^2$ converges onto the graph $L_{\bar a}^u$ of $\bar y=\bar x^2+\bar a$ with $\bar a\leq \bar x\leq \bar x_1(\bar a)+\alpha$ for some $\alpha>0$ as $n \rightarrow \infty$.

Recall that a leaf of $\mathcal{F}^{s(i)}_{\bar{a},n}$ (resp.\ $\mathcal{G}^{u(i)}_{\bar{a},n}$) is said to be a $\Lambda^m_{\bar{a},n}$-leaf (resp.\ $\Lambda^{\mathrm{out}}_{a}$-leaf) if the leaf is contained in $W^s(\Lambda^m_{\bar{a},n})$ (resp.\ $W^u(\Lambda^{\mathrm{out}}_{a})$).
Consider  $\Lambda^m_{\bar{a},n}$-leaves $f_{\bar a,n}^{(i)}$ of $\mathcal{F}^{s(i)}_{\bar{a},n}$ and $\Lambda^{\mathrm{out}}_{a}$-leaves $g_{\bar a,n}^{(i)}$ of $\mathcal{G}^{u(i)}_{\bar{a},n}$ $C^1$ depending on $\bar a\in \bar J_{\bar \delta}$, where $\bar\delta>0$ is 
taken sufficiently small so that $\bar J_{\bar \delta}=[-2-\bar\delta,-2+\bar\delta]$ is contained in the interval $\bar J$ given in the proof of Lemma \ref{thick horseshoe}.
Let $\bar{V}$ be a small open neighborhood of $(-2,2)$ in the $\bar{x}\bar{y}$-space and $\xi$ a vertical segment in $\bar{V}$ meeting $f_{\bar a,n}^{(i)}$, $g_{\bar a,n}^{(i)}$, $S_{\bar a,n}$, $L^u_{\bar a,n}$ almost orthogonally for any $\bar a\in \bar J_{\bar \delta}$, see Figure \ref{fg_8}. 
%%%%%%%%%%%%%%%%%%%%%%%%%
\begin{figure}[hbt]
\begin{center}
\scalebox{0.85}{\includegraphics[clip]{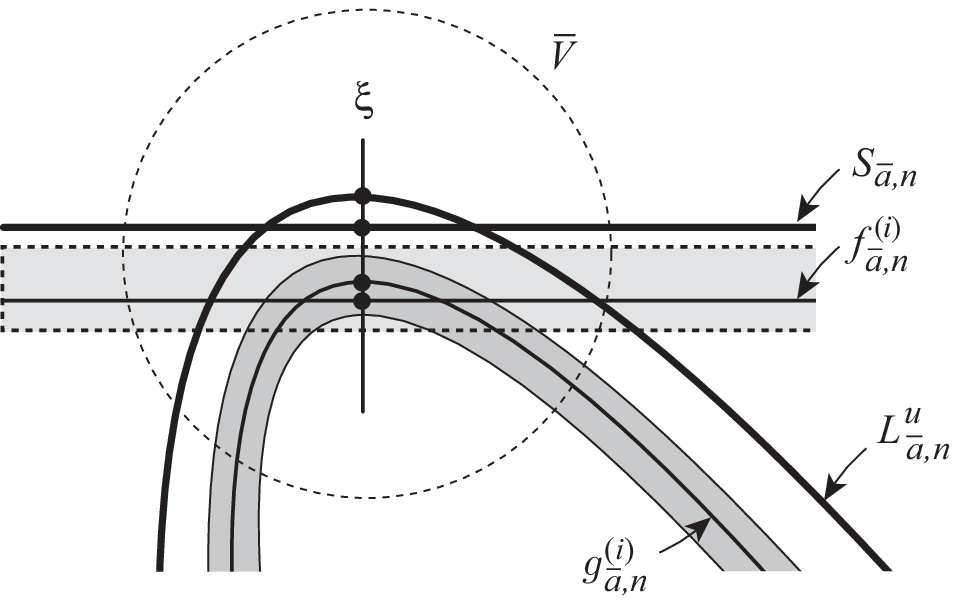}}
\caption{}
\label{fg_8}
\end{center}
\end{figure}
%%%%%%%%%%%%%%%%%%%%%%
We denote the the $\bar y$-entries of the coordinates of the intersections $f_{\bar a,n}^{(i)}\cap \xi$, $g_{\bar a,n}^{(i)}\cap \xi$, $S_{\bar a,n}\cap\xi$, $L^u_{\bar a,n}\cap\xi$ by $\bar y(f_{\bar a,n}^{(i)})$, $\bar y(g_{\bar a,n}^{(i)})$, $\bar y(S_{\bar a,n})$ and $\bar y(L^u_{\bar a,n})$ respectively.

\begin{lemma}\label{velocity}
With the notation as above, if we take $\bar \delta>0$, $\bar V$ sufficiently small and  $n$ sufficiently large, then there exists an integer $i_0=i_0(n)>0$ such that, if $i\geq i_0$, the derivatives of $\bar y(f_{\bar a,n}^{(i)})$ and $\bar y(g_{\bar a,n}^{(i)})$ satisfy
\begin{equation}\label{ineq}
\frac{d \bar y(f_{\bar a,n}^{(i)})}{d \bar a}-\frac{d \bar y(g_{\bar a,n}^{(i)})}{d \bar a}>2.
\end{equation}
\end{lemma}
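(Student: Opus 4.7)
The plan is to apply the Accompanying Lemma (Lemma \ref{accompany}) twice, once dually for the pulled-back stable foliation $\mathcal{F}^{s(i)}_{\bar a,n}$ and once directly for the pushed-forward unstable foliation $\mathcal{G}^{u(i)}_{\bar a,n}$, so as to replace the two leaf-velocities in (\ref{ineq}) by the velocities of the accompanying stable/unstable arcs $S_{\bar a,n}$ and $L^u_{\bar a,n}$ of the renormalized fixed point $P_{\bar a,n}$; then I would pass to the limit $n\to\infty$ and read off the required inequality from the explicit closed-form graphs of these arcs for the limit endomorphism $\psi_{\bar a}$.

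For the reduction, the transverse crossing $l^u_{\bar a,n}\subset W^u(P_{\bar a,n})$ with $\mathcal{F}^s_{\bar a,n}$ set up just before the statement, combined with the Inclination Lemma, puts us in the time-reversed situation of Lemma \ref{accompany}: leaves of $\psi_{\bar a,n}^{-i}(\mathcal{F}^s_{\bar a,n})$ accumulate $C^1$-ly onto $S_{\bar a,n}=W^s(P_{\bar a,n})$, so the dual form of Lemma \ref{accompany} yields
\begin{equation*}
\Bigl|\frac{d\bar y(f^{(i)}_{\bar a,n})}{d\bar a}-\frac{d\bar y(S_{\bar a,n})}{d\bar a}\Bigr|<\delta
\end{equation*}
for every $\bar a\in\bar J_{\bar\delta}$, provided $i\geq i_0(n)$ is large enough. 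The symmetric argument applied to $\mathcal{G}^{u(i)}_{\bar a,n}$, with the transverse intersection this time supplied by $\sigma^s_{\bar a,n}\subset W^s(P_{\bar a,n})$ meeting $\mathcal{G}^u_a$ (recalled in the paragraph before the lemma), gives
\begin{equation*}
\Bigl|\frac{d\bar y(g^{(i)}_{\bar a,n})}{d\bar a}-\frac{d\bar y(L^u_{\bar a,n})}{d\bar a}\Bigr|<\delta.
\end{equation*}

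For the limit computation, I use that $\psi_{\bar a,n}\to\psi_{\bar a}$ with $\psi_{\bar a}(\bar x,\bar y)=(\bar y,\bar y^2+\bar a)$ in $C^2$ on compacts, $P_{\bar a,n}\to P_{\bar a}=(\bar y_P,\bar y_P)$ with $\bar y_P=\bar y_P(\bar a)=(1+\sqrt{1-4\bar a})/2$, and the stable eigendirection of $D\psi_{\bar a,n}$ at $P_{\bar a,n}$ converges to the null $\bar x$-direction of $D\psi_{\bar a}(P_{\bar a})$. Consequently $S_{\bar a,n}$ is forced to $C^2$-converge on $\bar V$ to the horizontal arc at height $\bar y_P(\bar a)$, while by the construction preceding the lemma $L^u_{\bar a,n}$ $C^2$-converges to $L^u_{\bar a}=\{\bar y=\bar x^2+\bar a\}$. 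Restricting to the fixed $\bar x$-coordinate $\bar x_0$ of $\xi\subset\bar V$ and differentiating in $\bar a$ reduces the inequality to an explicit pointwise comparison of $d\bar y_P/d\bar a$ and $d(\bar x_0^2+\bar a)/d\bar a$ at $\bar a=-2$; the resulting pointwise bound then propagates to all of $\bar J_{\bar\delta}$ by continuity of the derivatives, once $\bar\delta$ is taken small enough, and the $2\delta$ error coming from Lemma \ref{accompany} is absorbed by taking $\delta$ small.

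The main obstacle will be upgrading the natural $C^0$-convergence of the invariant arcs to a genuine $C^2$-convergence that is \emph{uniform} in $\bar a\in \bar J_{\bar\delta}$, so that $d/d\bar a$ and $\lim_{n\to\infty}$ can be interchanged. This forces one to lean on the $C^2$-statement of the renormalization theorem (already invoked through Romero \cite{R94} in the proof of Lemma \ref{thick horseshoe}) and to be careful about the order of quantifiers: $n$ is chosen large first, the neighborhood $\bar V$ (and hence $\xi$) is confined to the compact region on which this $C^2$-convergence is under control, and only then is $i\geq i_0(n)$ selected via Lemma \ref{accompany}. Once these choices are in place, the inequality (\ref{ineq}) follows from the residual mechanical comparison of the two limiting derivatives.
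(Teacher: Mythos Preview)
Your overall architecture matches the paper's: apply Lemma \ref{accompany} twice to replace the leaf velocities by those of $S_{\bar a,n}$ and $L^u_{\bar a,n}$, then pass to the limit $n\to\infty$ and compute explicitly. The computation for $S_{\bar a,n}$ is correct and gives $d\bar y(S_{\bar a,n})/d\bar a\to d\bar y_P/d\bar a=-1/3$ at $\bar a=-2$, as in the paper.

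The gap is in your treatment of $L^u_{\bar a,n}$. You read the intersection $L^u_{\bar a,n}\cap\xi$ off the limit graph as $(\bar x_0,\bar x_0^2+\bar a)$ for fixed $\bar x_0$, which would give $d\bar y/d\bar a=1$ and hence a difference $-1/3-1=-4/3$, so \eqref{ineq} would \emph{fail}. This reading is inconsistent with the setup: $\xi$ is required to meet $L^u_{\bar a,n}$ \emph{almost orthogonally}, whereas the graph $\bar y=\bar x^2+\bar a$ has slope $2\bar x_0\approx -4$ at $\bar x_0\approx -2$. The curve $L^u_{\bar a,n}$ has three local maxima (see the paragraph before the lemma), and the almost-orthogonal intersection with $\xi$ in $\bar V$ lies near one of these \emph{folds}, not on the steep graph branch. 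As $n\to\infty$ that fold collapses to the left \emph{endpoint} $(\bar a,\bar a^2+\bar a)$ of the limit arc $L^u_{\bar a}$, so the relevant height is $\bar y_2(\bar a)=\bar a^2+\bar a$ with $d\bar y_2/d\bar a=2\bar a+1\to -3$. With this correct value the difference is $-1/3-(-3)=8/3>2$, and after absorbing the $2\delta$ error from Lemma \ref{accompany} one obtains \eqref{ineq}. In short, your ``residual mechanical comparison'' must track the moving endpoint of the limit arc, not the graph value over a fixed abscissa; as written, the final step gives the wrong sign.
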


\begin{proof}
For any $\bar a\in \bar J_{\bar \delta}$, consider the endomorphism $\psi_{\bar a}(\bar x,\bar y)=(\bar y,\bar y^2+\bar a)$ given in the proof of Lemma \ref{thick horseshoe}.
The point $P_{\bar a}=(\bar x_1,\bar y_1)$ with $\bar x_1(\bar a)=\bar y_1(\bar a)=(1+\sqrt{1-4\bar{a}}\,)/2$ is a fixed point of $\psi_{\bar a}$ close to $(2,2)$.
If necessary slightly extending $\xi$, we may assume that $\xi$ meets the line $\Gamma_{\bar a}: \bar y=(1+\sqrt{1-4\bar{a}}\,)/2$ orthogonally in a single point.
Since $d \bar y_1/d \bar a\rightarrow -1/3$ as $\bar a\rightarrow -2$ and $S_{\bar a,n}$ $C^2$ converges to a segment in $\Gamma_{\bar a}$ as $n\rightarrow \infty$, $d \bar y(S_{\bar a,n})/d \bar a$ is well approximated by $-1/3$ for all sufficiently large $n$.

The curve $L_{\bar a}^u$ has the point $v_1=(0,\bar a)$ as a unique minimal point.
The $\psi_{\bar{a}}$-image $(\bar x_2,\bar y_2)=(\bar a,\bar a^2+\bar a)$ of $v_1$ is the left side end point of $L^u_{\bar a}$.
Since $d \bar y_2/d \bar a=2\bar a+1\rightarrow -3$ as $\bar a\rightarrow -2$, $d \bar y(L_{\bar a,n}^u)/d \bar a$ is well approximated by $-3$ for all sufficiently large $n$ and any $\bar a\in \bar J_{\bar \delta}$ if we take $\bar\delta>0$ sufficiently small.

Now, we apply Accompanying Lemma (Lemma \ref{accompany}) to the ordered families $(\Lambda_a^{\mathrm{out}}, p_a,\mathcal{G}_a^u,P_{\bar a,n},\mathcal{G}_{\bar a,n}^{u(i)},\xi)$ and $(\Lambda_{\bar a,n}^m, Q_{\bar a,n}^m, \mathcal{F}_{\bar a,n}^u,P_{\bar a,n},\mathcal{F}_{\bar a,n}^{s(i)},\xi)$ each of which corresponds to $(\Lambda_t, p_t,\mathcal{F}_t^u,\hat p_t,\mathcal{F}_t^{u(k_0+j)},\sigma)$ in Subsection \ref{s_accompany} (see also Fig.\ \ref{fg_1}).
Then, there exists an integer $i_0=i_0(n)>0$ such that, for any $i\geq i_0$,
$$
\left\vert \frac{d \bar y(S_{\bar a,n})}{d \bar a}-\frac{d \bar y(f_{\bar a,n}^{(i)})}{d \bar a}\right\vert <\frac{1}{10}\quad\mbox{and}\quad
\left\vert \frac{d \bar y(L_{\bar a,n}^u)}{d \bar a}-\frac{d \bar y(g_{\bar a,n}^{(i)})}{d \bar a}\right\vert <\frac{1}{10}.$$
This implies our desired inequality (\ref{ineq}).
\end{proof}

Here, we consider the case when $f_{\bar a,n}^{(i)}$ and $g_{\bar a,n}^{(i)}$ have a quadratic tangency in $\xi$.
Then, Lemma \ref{velocity} shows that the tangency unfolds generically with respect to $\psi_{\bar a,n}$ and hence to $\varphi_a$ in the sense of Definition \ref{d_unfold}.

\section{Proof of Theorems \ref{thm_A} and \ref{thm_B}}\label{S_5}

In this section, we will work with the notation as in Section \ref{S_4} and Convention \ref{conv_1}.
Theorems \ref{persistent tangency} and \ref{l_0} below imply respectively the assertions (i) and (ii) of Theorem \ref{thm_A}.

\begin{theorem}\label{persistent tangency}
For any $b\in I\setminus \{0\}$, there exists a closed subinterval $J_b$ of $J_{b,1}$ such that the one-parameter family $\{\varphi_a\}_{a\in J_b}$ with $\varphi_a=\varphi^w_{a,b}$ has generically unfolding persistent quadratic tangencies of $W^u(\Lambda^{\mathrm{out}}_a)$ and $W^s(\Lambda^m_{\bar a,n})$.
\end{theorem}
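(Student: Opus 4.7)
The plan is to follow the Palis--Takens framework for persistent tangencies (cf.\ Section 6.3 of \cite{PT}), translating the problem into a Gap Lemma intersection statement for two Cantor sets sitting on the vertical segment $\xi$. First, I would project $\Lambda^m_{\bar a, n}$ onto $\xi$ along the leaves of $\mathcal{F}^{s(i)}_{\bar a,n}$ to obtain a Cantor set $K^s \subset \xi$, and similarly project $\Lambda^{\mathrm{out}}_a$ onto $\xi$ along the leaves of $\mathcal{G}^{u(i)}_{\bar a,n}$ to obtain $K^u \subset \xi$. Since the leaves of both foliations are $C^3$ with $C^1$ transverse variation (property (iii) of compatible foliations), these projections are $C^1$ maps of bounded distortion; together with Lemma \ref{thick horseshoe}, this gives $\tau(K^s)\cdot\tau(K^u) > (2/c)\cdot c = 2 > 1$, uniformly in $\bar a \in \bar J_{\bar\delta}$ and for all large $i$.

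Next, I would locate a parameter value $\bar a_0 \in \bar J_{\bar\delta}$ at which the two Cantor sets $K^s(\bar a_0)$ and $K^u(\bar a_0)$ are linked, in the sense that the smallest intervals containing them in $\xi$ interlace. Such a parameter is produced from the heteroclinic transverse intersection already established in Section \ref{S_4}: the arc $\sigma^s_{\bar a,n} \subset W^s(\Lambda^m_{\bar a,n})$ was arranged to cross $\mathcal{G}^u_a$ transversely after iteration, and on the transversal $\xi$ this forces an actual crossing of leaves of the two basic sets and hence interlacing of the associated Cantor sets.

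With linking at $\bar a_0$ in hand, I would invoke Lemma \ref{velocity} to control how $K^s(\bar a)$ and $K^u(\bar a)$ drift as $\bar a$ varies: the difference $\bar y(f^{(i)}_{\bar a,n}) - \bar y(g^{(i)}_{\bar a,n})$ has derivative strictly greater than $2$. Choosing a sufficiently short closed subinterval $\bar J_b \subset \bar J_{\bar\delta}$ containing $\bar a_0$ ensures that the linking of $K^s(\bar a)$ and $K^u(\bar a)$ persists for every $\bar a \in \bar J_b$, so the Gap Lemma gives $K^s(\bar a)\cap K^u(\bar a)\neq\emptyset$ throughout $\bar J_b$. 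Each such intersection point corresponds to a pair of leaves $f^{(i)}_{\bar a,n}$, $g^{(i)}_{\bar a,n}$ sharing a $\bar y$-coordinate on $\xi$; because $g^{(i)}_{\bar a,n}$ approximates the locally parabolic curve $L^u_{\bar a,n}$ near its peak while $f^{(i)}_{\bar a,n}$ approximates the nearly flat $S_{\bar a,n}$, this common value is precisely the condition for a quadratic tangency of $W^u(\Lambda^{\mathrm{out}}_a)$ with $W^s(\Lambda^m_{\bar a,n})$. Setting $J_b = \Theta_n(\bar J_b)$ produces the desired interval in the $a$-parameter space, and the nonvanishing relative velocity in Lemma \ref{velocity} is exactly the transversality hypothesis of Definition \ref{d_unfold}, so every such tangency unfolds generically with respect to $\{\varphi_a\}_{a\in J_b}$.

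The main obstacle will be the linking step. The thickness product being greater than $1$ gives nonempty intersection of the two Cantor sets \emph{only} when linking has been established; linking itself is not automatic and must be extracted from the specific geometry of $\mathcal{F}^{s(i)}$ and $\mathcal{G}^{u(i)}$ near the peak of $L^u_{\bar a,n}$ together with the heteroclinic connection assembled in Section \ref{S_4}. A secondary technical point is to keep the distortion of the foliation projections onto $\xi$ small enough, uniformly in $\bar a$, so that the product of thicknesses stays strictly above $1$; this is handled by taking $i$ and $n$ sufficiently large and invoking property (iii) of compatible foliations together with the Inclination Lemma.
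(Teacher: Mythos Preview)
Your overall strategy---reduce to a Gap Lemma statement for two Cantor sets on a transversal, then use Lemma~\ref{velocity} for generic unfolding---matches the paper's. But the choice of transversal is wrong, and this is a genuine gap, not a technicality.

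Projecting the two basic sets onto the fixed vertical segment $\xi$ along the leaves of $\mathcal{F}^{s(i)}_{\bar a,n}$ and $\mathcal{G}^{u(i)}_{\bar a,n}$ produces Cantor sets $K^s,K^u\subset\xi$ whose intersection points are exactly the points of $\xi$ lying simultaneously on a $\Lambda^m_{\bar a,n}$-leaf and a $\Lambda^{\mathrm{out}}_a$-leaf. That is an \emph{intersection} of the two leaves, not a tangency. Your justification---that $g^{(i)}_{\bar a,n}$ is parabola-like and $f^{(i)}_{\bar a,n}$ is nearly flat---does not help: at a generic point of $\xi$ the parabolic leaf is not at its vertex, so its slope there is nonzero and the contact is transverse. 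The peaks of the various $\mathcal{G}^{u(i)}$-leaves do not all sit on the single vertical line $\xi$.

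What the paper does (following the Palis--Takens mechanism) is replace $\xi$ by the \emph{tangency locus}: an almost vertical $C^1$ arc $\rho$ in $\bar V$ characterized by the property that every point of $\rho$ lying on a leaf of each foliation is automatically a tangency of those leaves. One first uses the Intermediate Value Theorem (via the relative drift from Lemma~\ref{velocity}) to find a parameter $\bar a$ at which the locally highest leaves of $\mathcal{G}^{u(i)}_{\bar a,n}$ and $\mathcal{F}^{s(i)}_{\bar a,n}$ touch; for nearby $\bar a_1$ the curve $\rho$ exists and carries subarcs $\rho^u,\rho^s$ crossed exactly by the two foliations. The holonomy projections $\pi^u,\pi^s$ onto $\rho$ are $C^1$, so the thickness estimates of Lemma~\ref{thick horseshoe} transfer and the Gap Lemma applies on $\rho$. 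Now an intersection of the projected Cantor sets \emph{is} a tangency by construction, and Lemma~\ref{velocity} gives generic unfolding. This also dissolves the linking issue you flagged: the IVT step producing the initial touching of highest leaves is precisely what guarantees the two Cantor sets on $\rho$ are interlinked for $\bar a_1$ in a closed subinterval $\bar J_b$.
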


Note that $\Lambda^m_{\bar a,n}$ corresponds to the basic set $\Lambda^{\mathrm{in}}_a$ in the statement of 
Theorem \ref{thm_A}.

\begin{proof} 
We will work with $\bar \delta$, $\bar V$, $n$ and $i$ satisfying the conclusion of Lemma \ref{velocity}.
By the Intermediate Value Theorem, there exists an $\bar a\in \bar J_{\bar\delta}$ such that the locally highest leaf of $\mathcal{G}^{u(i)}_{\bar{a},n}$ in $\bar V$ and that of $\mathcal{F}^{s(i)}_{\bar{a},n}$ has a tangency in $\bar V$, see Figure \ref{fg_9}-(i).
%%%%%%%%%%%%%%%%%%%%%%%%%
\begin{figure}[hbt]
\begin{center}
\scalebox{0.75}{\includegraphics[clip]{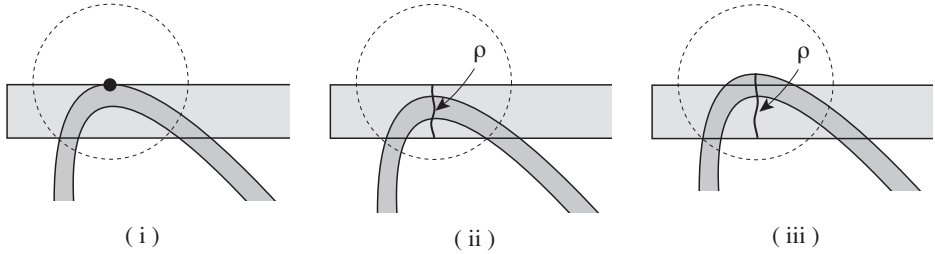}}
\caption{(ii) The case of $\bar a_1>\bar a$.  (iii) The case of $\bar a_1<\bar a$.}
\label{fg_9}
\end{center}
\end{figure}
%%%%%%%%%%%%%%%%%%%%%%
Then, for any $\bar a_1\in J_{\bar \delta}$ sufficiently close to $\bar a$, there exists an almost vertical $C^1$ arc $\rho$ in $\bar V$ containing subarcs $\rho^u$ and $\rho^s$ which respectively cross $\mathcal{G}^{u(i)}_{\bar{a},n}$ and $\mathcal{F}^{s(i)}_{\bar{a},n}$ exactly, and such that each point of $\rho\cap \mathcal{G}^{u(i)}_{\bar{a},n}\cap \mathcal{F}^{s(i)}_{\bar{a},n}$ is a tangency of leaves in $\mathcal{G}^{u(i)}_{\bar{a},n}$ and $\mathcal{F}^{s(i)}_{\bar{a},n}$, see Figure \ref{fg_9}-(ii) and (iii).
Let $\pi^u: \varphi^i_{a_1}(\mathcal{G}^u_{a_1})\to \rho^u\subset \rho$ and $\pi^s: \psi^{-i}_{\bar a_1,n}(\mathcal{F}^s_{\bar a_1,n})\to \rho^s\subset \rho$ be the $C^1$ projections along their leaves.
Then, the compositions $\eta^u =\pi^u\circ(\varphi^i_{a_1}|l^s_{a_1}):l^s_{a_1}\to \rho$ and $\eta^s =\pi^s\circ(\psi^{-i}_{\bar a_1,n}|l^u_{\bar a_1,n}):l^u_{\bar a_1,n}\to \rho$ are $C^1$ embeddings onto $\rho^u$ and $\rho^s$ respectively.
Note that each point of $\eta^u(W^u_{\mathrm{loc}}(\Lambda_{a_1}^{\mathrm{out}})\cap l^s_{a_1})$ (resp.\ $\eta^s(W^s_{\mathrm{loc}}(\Lambda_{\bar a_1,n}^m)\cap l^u_{\bar a_1,n})$) is in a $\Lambda_{a_1}^{\mathrm{out}}$-leaf (resp.\ $\Lambda_{\bar a_1,n}^m$-leaf).

By Lemma \ref{thick horseshoe}\,(ii), if necessary replacing $n$ and hence $i$ by greater integers, one can suppose that
$$\tau(\Lambda^m_{\bar a_1,n}\cap l^u_{\bar a_1,n})>\frac{1}{c}.$$
This fact together with Lemma \ref{thick horseshoe}\,(i) shows that
$$\tau(\Lambda_{a_1}^{\mathrm{out}}\cap l^s_{a_1})\cdot\tau(\Lambda^m_{\bar a_1,n}\cap l^u_{\bar a_1,n})>1.$$
Thus, there exists a closed subinterval $\bar J_b$ of $\bar J_{\bar\delta}$ such that the last inequality holds if $\bar a_1\in \bar J_b$.
By invoking Thickness Lemma \cite{N0,PT}, for any $\bar a_1\in \bar J_b$, we have a $\Lambda_{a_1}^{\mathrm{out}}$-leaf in $\mathcal{G}^{u(i)}_{\bar a_1,n}$ and a $\Lambda^m_{\bar a_1,n}$-leaf in  $\mathcal{F}^{s(i)}_{\bar a_1,n}$ admitting a quadratic tangency $q_{\bar a_1}$ in $\bar V$.
By considering a vertical segment $\xi$ in $\bar V$ passing through $q_{\bar a_1}$ and using Lemma \ref{velocity}, one can show that $q_{\bar a_1}$ unfolds generically with respect to $\{\psi_{\bar a,n}\}$ and hence to $\{\varphi_{a}\}$.
Since $\bar J_b\subset \bar J_{\bar \delta}\subset \bar J$, it follows that $J_b=\Theta_n(\bar J_b)$ is our desired subinterval of $J_{b,1}=\Theta_n(\bar J)$.
\end{proof}

Theorem \ref{persistent tangency} presents persistent heteroclinic tangencies associated to two 
basic sets. 
The following theorem presents homoclinic
tangencies associated to the fixed point $p_a$ of $\varphi_a$ 
and so we conclude the assertion (ii) of  Theorem \ref{thm_A}.

\begin{theorem}\label{l_0}
With the notation as above, there exists a dense subset $J'$ of $J_b$ such that, for any $\hat a\in J'$, 
$W^u(p_{\hat a})$ and $W^s(p_{\hat a})$ have a quadratic tangency $q_{\hat a}$ which unfolds generically 
with respect to $\{\varphi_a\}_{a\in J_b}$.
\end{theorem}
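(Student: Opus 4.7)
The plan is to deduce Theorem~\ref{l_0} from Theorem~\ref{persistent tangency} by approximating the persistent heteroclinic quadratic tangencies between $W^u(\Lambda_a^{\mathrm{out}})$ and $W^s(\Lambda_{\bar a,n}^m)$ by genuine homoclinic tangencies of the fixed point $p_a$. The two key facts I will use are: (a) since $p_a\in \Lambda_a^{\mathrm{out}}$, forward iterates of short arcs of $W^u(p_a)$ are $C^2$-dense in $W^u(\Lambda_a^{\mathrm{out}})$, in particular among the leaves of $\mathcal{G}^{u(i)}_{\bar a,n}$; and (b) $\Lambda_a^{\mathrm{out}}$ is homoclinically related to $\Lambda^m_{\bar a,n}$ for every $a\in J_b$, so by the Inclination Lemma suitable backward iterates of arcs of $W^s(p_a)$ are $C^2$-dense in $W^s(\Lambda^m_{\bar a,n})$, hence among the leaves of $\mathcal{F}^{s(i)}_{\bar a,n}$.

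To obtain the homoclinic relation in~(b) I would argue that, under the coordinate change $\Phi_n$, the set $\Lambda^m_{\bar a,n}$ sits near the homoclinic tangency $q_{h(b),b}$ of $p_a$, and the outer-horseshoe geometry together with the generic unfolding established in Proposition~\ref{tangency} produces transverse intersections $W^u(p_a)\pitchfork W^s(Q^m_{a,n})$ and $W^s(p_a)\pitchfork W^u(Q^m_{a,n})$ for every $a\in J_b$; the Inclination Lemma then yields the required $C^2$-approximation. With~(a) and~(b) in hand, I fix an arbitrary $a^*\in J_b$ and $\varepsilon>0$. Theorem~\ref{persistent tangency} produces a quadratic tangency at $a^*$ between a $\Lambda_{a^*}^{\mathrm{out}}$-leaf $g^*$ of $\mathcal{G}^{u(i)}_{\bar a^*,n}$ and a $\Lambda^m_{\bar a^*,n}$-leaf $f^*$ of $\mathcal{F}^{s(i)}_{\bar a^*,n}$ in $\bar V$. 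I then choose arcs $u\subset W^u(p_a)$ and $s\subset W^s(p_a)$ (specific components of high forward resp.\ backward iterates meeting $\bar V$) which, for $a$ in a small interval $U\ni a^*$, $C^2$-approximate $g^*$ and $f^*$ uniformly to within $\varepsilon$.

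By Lemma~\ref{velocity}, the $\bar a$-derivative of the difference of $\bar y$-heights of $g^*$ and $f^*$ on the vertical test segment $\xi$ through $q_{\bar a^*}$ exceeds $2$; the $C^1$-proximity makes the corresponding derivative for $u$ and $s$ exceed $1$ throughout $U$. Since $g^*$ and $f^*$ are tangent at $a^*$, the heights of $u$ and $s$ on $\xi$ differ by only $O(\varepsilon)$ there, and the Intermediate Value Theorem supplies $\hat a\in U$ with $|\hat a-a^*|=O(\varepsilon)$ at which $u$ and $s$ are tangent; $C^2$-proximity plus the nondegenerate curvature of $g^*$ and $f^*$ makes this tangency quadratic, and the velocity separation together with Definition~\ref{d_unfold} gives the generic unfolding. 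Letting $\varepsilon\to 0$ with $a^*$ ranging over $J_b$, the resulting parameters $\hat a$ form a dense subset $J'$ of $J_b$. The main obstacle I anticipate is step~(b): verifying the homoclinic relation between the outer and renormalized inner horseshoes precisely enough to transport transverse intersections back through $\Phi_n$, and ensuring that the $C^2$-approximation of specific $\mathcal{F}^{s(i)}$- and $\mathcal{G}^{u(i)}$-leaves by iterates of $W^{s/u}(p_a)$ is uniform in $a$ over the whole interval $U$.
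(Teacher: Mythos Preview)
Your overall strategy coincides with the paper's: pass from the persistent heteroclinic tangencies of Theorem~\ref{persistent tangency} to homoclinic tangencies of $p_a$ by first relating $\Lambda_a^{\mathrm{out}}$ and $\Lambda^m_{\bar a,n}$ homoclinically, then approximating the tangent leaves by arcs of $W^{u/s}(p_a)$ and perturbing the parameter. The genuine gap is in the unfolding step. You write that ``the $C^1$-proximity makes the corresponding derivative for $u$ and $s$ exceed $1$'', but $C^2$-closeness of the arc $s\subset W^s(p_a)$ to the $\Lambda^m_{\bar a,n}$-leaf $f^*$ in the $(\bar x,\bar y)$-variables, even uniformly over $a\in U$, does \emph{not} control the $\bar a$-derivative of $\bar y(s\cap\xi)$: the Inclination Lemma delivers spatial convergence at each parameter, not convergence of parameter-velocities. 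Since $p_a\notin \Lambda^m_{\bar a,n}$, the arc $s$ is not a leaf of $\mathcal{F}^{s(i)}_{\bar a,n}$, so Lemma~\ref{velocity} does not apply to it. The correct tool here is the Accompanying Lemma (Lemma~\ref{accompany}), whose hypotheses require $s$ to accumulate on the stable manifold of a specific saddle \emph{periodic} point. This is exactly why the paper inserts an additional perturbation to a parameter $a_2'$ at which the heteroclinic tangency lies on leaves through periodic points $\hat p^u_{a_2'}\in\Lambda^{\mathrm{out}}_{a_2'}$ and $\hat p^s_{a_2'}\in\Lambda^m_{\bar a_2',n}$, and only then invokes Lemma~\ref{accompany} to transfer the velocity bound from those periodic leaves to the nearby $W^{u/s}(p_{a_3})$-arcs. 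Your argument omits this step and the justification it provides.

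The obstacle you flag yourself---establishing the homoclinic relation for every $a\in J_b$---is also real, and the paper does not attempt it. Instead it perturbs an arbitrary $a_1\in J_b$ to a nearby $a_2$ at which the generically unfolding tangency has become a transverse crossing $z_1\in W^u(\Lambda^{\mathrm{out}}_{a_2})\pitchfork W^s(\Lambda^m_{\bar a_2,n})$, and then constructs the reverse transverse intersection $z_2\in W^s(\Lambda^{\mathrm{out}}_{a_2})\cap W^u(\Lambda^m_{\bar a_2,n})$ separately via a backward-iterated stable foliation $\mathcal{G}^{s(i)}_{\bar a_2,n}$; Newhouse's cycle lemma then merges the two basic sets into a single $\Lambda_{a_2}\ni p_{a_2}$. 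Since every perturbation can be made arbitrarily small, this already yields density and sidesteps the uniform-in-$a$ difficulties you anticipate.
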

\begin{proof}
For any fixed $a_1\in J_b$,  
there exists a heteroclinic quadratic tangency of a $\Lambda^{\mathrm{out}}_{a_1}$-leaf of $\mathcal{G}^{u(i)}_{\bar a_1,n}$ and a $\Lambda^m_{\bar{a}_1,n}$-leaf of $\mathcal{F}^{s(i)}_{\bar a_1,n}$ in $\Bar{V}$ unfolding generically with respect to $\varphi_a$.
By Lemma \ref{velocity}, there exists an $a_2\in J_b$  arbitrarily close to $a_1$ such that $W^u(\Lambda^{\mathrm{out}}_{a_2})$ and $W^s(\Lambda^m_{\bar{a}_2,n})$ have a transverse intersection $z_1$ in $\bar V$, see Fig.\ \ref{fg_10}.
%%%%%%%%%%%%%%%%%%%%%%%%%
\begin{figure}[hbt]
\begin{center}
\scalebox{0.85}{\includegraphics[clip]{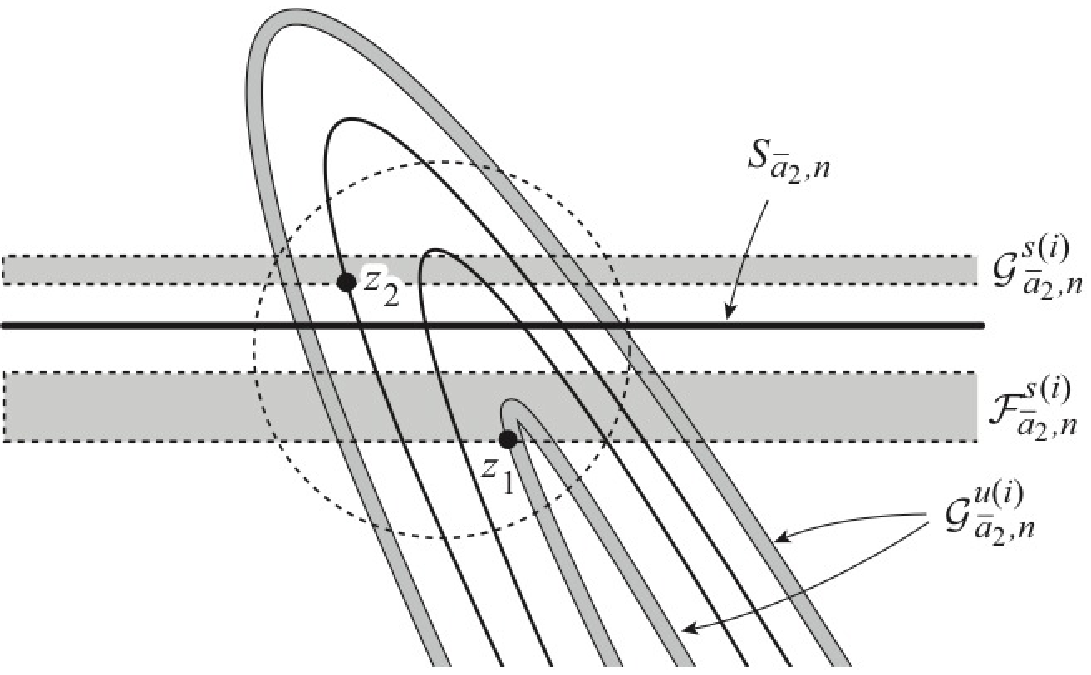}}
\caption{}
\label{fg_10}
\end{center}
\end{figure}
%%%%%%%%%%%%%%%%%%%%%%

Here, we will show that $W^s(\Lambda^{\mathrm{out}}_{a_2})$ and $W^u(\Lambda^m_{\bar{a}_2,n})$ also have a transverse intersection.
Let $\mathcal{G}^s_{a_2}$ be a foliation compatible with $W^s_{\mathrm{loc}}(\Lambda_{a_2}^{\mathrm{out}})$ such that any leaves of $\mathcal{G}^s_{a_2}$ and $\mathcal{G}^u_{a_2}$ meet transversely in a single point, see Fig.\ \ref{fg_11} (and also Fig.\ \ref{fg_6}).
%%%%%%%%%%%%%%%%%%%%%%%%%
\begin{figure}[hbt]
\begin{center}
\scalebox{0.85}{\includegraphics[clip]{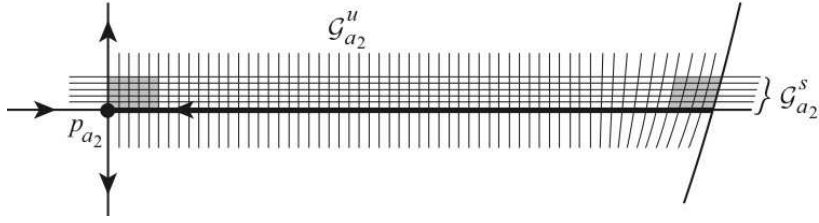}}
\caption{The union of the shaded regions contains $\Lambda_{a_2}^{\mathrm{out}}$.}
\label{fg_11}
\end{center}
\end{figure}
%%%%%%%%%%%%%%%%%%%%%%
Again by \cite[p.\ 125, Proposition 1, p.\ 129, Remark 1]{PT} and the Inclination Lemma, for any sufficiently large integer $i>0$, one can have a foliation $\mathcal{G}^{s(i)}_{\bar{a}_2,n}$ obtained by shortening the leaves of $\varphi^{-i}_{a_2}(\mathcal{G}^s_{a_2})$ so that all leaves  of $\mathcal{G}^{s(i)}_{\bar a_2,n}$ are well approximated by the arc $S_{\bar a_2,n}$ given in Section \ref{S_4}.
Then, as is shown in Figure \ref{fg_10}, we have a transverse intersection point $z_2$ of a $\Lambda^{\mathrm{out}}_{a_2}$-leaf of $\mathcal{G}^{s(i)}_{\bar a_2,n}$ and a leaf of $W^u(\Lambda^m_{\bar{a}_2,n})$.
Applying \cite[Lemma 8]{N1} to the cycle $\{\Lambda^{\mathrm{out}}_{a_2},z_1,\Lambda^m_{\bar{a}_2,n},z_2\}$, one can define a basic set $\Lambda_{a_2}$ with $\Lambda_{a_2}\supset \Lambda^{\mathrm{out}}_{a_2}\cup \Lambda^m_{\bar{a}_2,n}$.

Since $a_2$ is an element of $J_b$, by Theorem \ref{persistent tangency}, $W^u(\Lambda^{\mathrm{out}}_{a_2})$ and $W^s(\Lambda^m_{\bar{a}_2,n})$ have a \emph{heteroclinic} quadratic tangency $q_{a_2}$ unfolding generically.
Since the basic sets $\Lambda^{\mathrm{out}}_{a_2}, \Lambda^m_{\bar{a}_2,n}$ have dense subsets consisting of saddle periodic points, 
there exists $a_2'\in J_b$ arbitrarily close to $a_2$ and satisfying the following two conditions.
\begin{itemize}
\item
There exist leaves $l_{a_2'}^u$ of $W^u(\Lambda^{\mathrm{out}}_{a_2'})$ and $l_{\bar{a}_2'}^s$ of $W^s(\Lambda^m_{\bar{a}_2',n})$ which have a quadratic tangency $q_{a_2'}$ unfolding generically and moreover pass through saddle periodic points $\hat p^u_{a_2'}\in 
\Lambda^{\mathrm{out}}_{a_2}$ and $\hat p^s_{a_2'}\in\Lambda^m_{\bar{a}_2,n}$ respectively.
\item
There is a basic set $\Lambda_{a_2'}$ of $\varphi_{a_2'}$ which belongs to a continuation of basic sets based at $\Lambda_{a_2}$.
\end{itemize}
Since $\Lambda_{a_2'}$ is a basic set containing $\Lambda^{\mathrm{out}}_{a_2'}\cup \Lambda^m_{\bar{a}_2',n}$, both $W^u(p_{a_2'})$ and $W^s(p_{a_2'})$ pass through an arbitrarily small neighborhood $U$ of $q_{a_2'}$.
Since $q_{a_2'}$ is a tangency unfolding generically, there exists $a_3\in J_b$ arbitrarily close to $a_2'$ such that $W^u(p_{a_3})$ and $W^s(p_{a_3})$ have a \emph{homoclinic} quadratic tangency $q_{a_3}$ in $U$.
By invoking Accompanying Lemma, one can show that $q_{a_3}$ is also a tangency unfolding generically.
Here, we note that Accompanying Lemma works only for leaves sufficiently close to a leaf passing through saddle periodic points.
In our case, the leaf $l^u_{a_3}$ (resp.\ $l^s_{a_3}$) passes through the saddle periodic points $\hat p^u_{a_3}$ (resp.\ 
$\hat p^s_{a_3}$).
This is our reason for replacing the parameter $a_2$ by $a_2'$.

Recall that $a_1\in J_b$ is taken arbitrarily and $a_3\in J_b$ is arbitrarily close to $a_1\in J_b$.
From this fact, we have a dense subset $J'$ of $J_b$ such that, for any $\hat a\in J'$, 
$W^u(p_{\hat a})$ and $W^s(p_{\hat a})$ have a quadratic tangency unfolding generically.
This completes the proof of Theorem \ref{l_0}.
\end{proof}

Next, we prove Theorem \ref{thm_B} by using results of Theorems \ref{persistent tangency} and \ref{l_0}.

\begin{proof}[Proof of Theorem \ref{thm_B}]
For any $\hat a\in J'$, one can apply the Palis-Takens Renormalization Theory to a small neighborhood of $q_{\hat a}$.
Then, Proposition 3.3 in  Robinson \cite{R} and Lemma 2.2 in Newhouse \cite{N2} imply the existence of an open subinterval $Y_{\hat a}$ of $J_b$ arbitrarily close to $\hat a\in J'$ and such that, for any $a\in Y_{\hat a}$, $\varphi_{a}$ has at least one sink whose basin meets $W^u(p_{a})$ non-trivially.
From the denseness of $J^{\prime}$ in $J_b$ and the arbitrary closeness of $Y_{\hat a}$ to $\hat a\in J'$, we have an open dense subset $A_b^{(1)}$ of $J_b$ such that, for any $a\in A_b^{(1)}$, $\varphi_{a}$ admits at least one sink $r$ with $W^u(p_a)\cap B_r\neq \emptyset$, where $B_r$ is the basin of $r$.
According to Proposition 2.1 in \cite{N2}, if $\varphi_{a}$ had an SRB measure $\nu$ supported by the homoclinic set of $p_a$, then the support $\mathrm{supp}(\nu)$ would coincide with the closure $\mathrm{Cl}(W^u(p_{a}))$.
Since $\nu$ is a $\varphi_a$-invariant probability measure, it follows that $\mathrm{Cl}(W^u(p_{a}))\cap B_r\subset \{r\}$.
On the other hand, since $W^u(p_{a})\cap B_r\neq \emptyset$, the intersection would contain an arc, a contradiction.
Thus, for any $a\in A_b^{(1)}$, the homoclinic set of $p_{a}$ does not support any SRB measure.
This proves (i).

By applying arguments as in the proof of Theorem E in \cite[\S 9]{R} repeatedly, one can have open dense subsets $Z_n$ $(n=1,2,\dots)$ of $J_b$ with $Z_1=A_b^{(1)}$ such that $\varphi_{a}$ has at least $2^n$ sinks for any $a\in Z_n$ associated to the periodic-doubling bifurcation.
Then, $A_b^{(2)}=\bigcap_{n\geq 1}Z_{n}$ is a residual subset of $J_b$ such that $\varphi_{a}$ has infinitely many sinks if $a\in A_b^{(2)}$.
This shows (ii).

According to Wang-Young \cite[Appendix A.2]{WY}, for any $\hat a\in J'$, there exists a subset $X_{\hat a}$ of $J_b$ with 
positive Lebesgue measure and contained in an arbitrarily small neighborhood of $\hat a$ in $J_b$ and such that, for any $a\in X_{\hat a}$, $\varphi_a$ has a strange attractor with an SRB measure.
Again by the density of $J'$ in $J_b$, we have a subset $A_b^{(3)}$ of $J_b$ satisfying the conditions required in (iii) of Theorem \ref{thm_B}.
\end{proof}

\bigskip

\subsection*{Acknowledgments.}
The authors would like to thank the referees for their useful comments and suggestions. 
Also, the first and second  authors would like to thank  Bau-Sen Du and Yi-Chiuan Chen 
for their hospitality during our stay in Institute of Mathematics,  Academia Sinica.  
The first and third authors were partially supported by
Grant-in-Aid for Scientific Research (C) 22540226 and 22540092, respectively; 
the second author was supported by NSC 99-2115-M-009-004-MY2.

%%%%%%%%%%%%%%%%%

\end{document}